\documentclass[12pt]{article}
\usepackage{}
\usepackage{amssymb}
\setlength{\oddsidemargin}{0in} \setlength{\evensidemargin}{0in}
\setlength{\textheight}{9.0in} \setlength{\textwidth}{6.5in}
\setlength{\topmargin}{-0.5in}
\usepackage{amsmath}
\usepackage{amsthm}
\usepackage{amsfonts}
\usepackage{mathrsfs}
\usepackage{color}
\usepackage{appendix}
\usepackage[linesnumbered,ruled,vlined]{algorithm2e}
\newtheorem{thm}{Theorem}[section]
\newtheorem{prop}{Proposition}[section]
\newtheorem{lem}{Lemma}[section]
\newtheorem{rem}{Remark}[section]

\newtheorem{defn}{Definition}[section]
\newtheorem{exm}{Example}[section]
\newtheorem{ass}{Assumption}[section]

\numberwithin{equation}{section} \allowdisplaybreaks[4]
\begin{document}
\date{}
\pagestyle{plain}
\title{Zero-sum risk-sensitive continuous-time stochastic games with unbounded payoff and transition rates and Borel spaces\thanks{This work was supported in part by the National Natural Science Foundation of China (Grant No. 61673019, 11931018, 62073346, 61573206) and  Guangdong Province Key Laboratory of Computational Science at the Sun Yat-Sen University (2020B1212060032). \emph{(Corresponding author: Li Xia.)}}}
\author{Junyu Zhang, \ Xianping Guo, \ Li Xia \thanks{J. Zhang and X.P. Guo are both with the School of Mathematics, Sun Yat-Sen University, Guangzhou 510275, China (email:
\{mcszhjy;mcsgxp\}@mail.sysu.edu.cn). L. Xia is with the Business
School, Sun Yat-Sen University, Guangzhou 510275, China (email:
xial@tsinghua.edu.cn). All the authors are with the Guangdong
Province Key Laboratory of Computational Science at the Sun Yat-Sen
University. }}
\date{}
\maketitle

{\bf Abstract:} We study a finite-horizon {\it two-person zero-sum
risk-sensitive stochastic game} for continuous-time Markov chains
and Borel state and action spaces, in which payoff rates, transition
rates and terminal reward functions are allowed to be unbounded from
below and from above and the policies can be history-dependent.
Under suitable conditions, we establish the existence of a solution
to the corresponding Shapley equation (SE) by an approximation
technique. Then, by the SE and the extension of the Dynkin's
formula, we prove the existence of a Nash equilibrium and verify
that the value of the stochastic game is the unique solution to the
SE. Moreover, we develop a value iteration-type algorithm for
approaching to the value of the stochastic game. The convergence of
the algorithm is proved by a special contraction operator  in our
risk-sensitive stochastic game. Finally, we demonstrate our main
results by two examples.

\vskip 0.2 in \noindent{\bf Key Words.} Zero-sum risk-sensitive
stochastic game, unbounded transition/payoff rates,  Nash
equilibrium, iteration algorithm.


\setlength{\baselineskip}{0.25in}

\section{Introduction} \label{intro}

Markov chain is a fundamental model to formulate stochastic dynamic
systems. Markov decision process (MDP) is an important methodology
to study the performance optimization of stochastic dynamic systems
with a single decision-maker \cite{BS96,guo09,Puterman94}. The
classical MDP theory usually focuses on the performance criteria of
the discounted or average rewards. However, the risk-aware
performance criterion is also important for decision-makers in
specific scenarios, such as financial engineering. The
risk-sensitive MDP is motivated by the seminal work of Howard and
Matheson \cite{Howard72} and attracts continuous attention in the
literatures, which involve various risk metrics, such as exponential
utility function \cite{Ghosh14,Guo19}, variance
\cite{Hernandez99,Xia18}, value at risk (VaR) \cite{Hong14},
conditional VaR \cite{Bauerle14,huang16}, percentile
\cite{Filar95,Fu09}, and probability metrics \cite{White88}.

When we further consider stochastic dynamic systems with multiple
decision-makers, game theory is a typical framework. For stochastic
games (also known as Markov games), Markov model is widely adopted since it
has advantages of memoryless property to capture the system
dynamics. The study of stochastic games is pioneered by the seminal
work of Shapely \cite{Shapely53} and can be divided into categories
according to two-person or $n$-person, zero-sum or nonzero-sum,
discrete-time or continuous-time, static or dynamic, and other criteria. A two-person
zero-sum stochastic game is the mostly studied model since the
gains of one player is exactly the loss of the other player. The
Shapley equation is the fundamental concept to study the value of a
stochastic game and to prove the existence of Nash equilibria
\cite{Filar97,Neyman03}.

Most studies on stochastic games focus on the discounted performance
criterion since the associated contract operator is critical to
prove the convergence of games. However, as aforementioned,
risk-related performance metrics are also important for
decision-making in game theory, especially considering the fact that
human players are usually risk-averse in terms of gains but
risk-seeking in terms of losses, as indicated by the prospect theory
\cite{Kahneman79}. Although the risk-sensitive MDPs have been richly
investigated, the study of risk-sensitive stochastic games seems to
be relatively limited in the literatures, in which the risk metric
is usually of the form of exponential utility function
\cite{Howard72,Jacobson73}. That is, let $X$ be the accumulated
discounted rewards, which is a random variable. The targeted risk
metric is set as $\mathbb E[e^{\theta X}]$, instead of $\mathbb
E[X]$ in risk-neutral regime, where $\theta$ is a risk-sensitive
parameter and positive for risk-averse while negative for
risk-seeking. With the Taylor expansion of $\frac{1}{\theta}\ln
\mathbb E[e^{\theta X}]$, we can see that the targeted metric takes
into account both $\mathbb E[X]$ and $D[X]$ simultaneously and is a
proper performance metric for risk-aware decision-making problems
\cite{BU-17,Guo19}.

For risk-sensitive discrete-time stochastic games, Basu and Ghosh
study a two-person zero-sum stochastic game on the infinite horizon
with discounted and ergodic payoff criteria, where the value of
games and equilibria are proved by studying the
Hamilton-Jacobi-Bellman-Isaacs (HJBI) equation \cite{BasuGhosh12}.
They further extend the results to the cases with countable state
space \cite{BasuGhosh14} and nonzero-sum games \cite{BasuGhosh18},
respectively. Moreover,  Ba$\ddot{{\rm u}}$erle and Rieder study a
two-person zero-sum risk-sensitive discrete-time stochastic game
with Borel state and action spaces and bounded rewards, where the
existence of equilibria is proved and the value of the game is
obtained by solving the Shapley equation under continuity and
compactness conditions \cite{BU-17}.

When the stochastic game is of continuous-time,  Ba\c{s}ar studies a
class of nonzero-sum risk-sensitive differential games with
parameterized nonlinear dynamics and parameterized cost functions,
where the sensitivity of the Nash equilibrium is also studied from a
viewpoint of optimal control \cite{Basar99}. Ba\c{s}ar and his
collaborators further extend the risk-sensitive differential game to
a scenario with $n$-person in the model of mean-field, where a
Hamilton-Jacobi-Bellman-Fleming equation is utilized to study the
value of the game \cite{Tembine2011}. There are other excellent
works on risk-sensitive differential games, just to name a few
\cite{Biswas2020,El-Karoui03,Fleming11}, where varied forms of HJBI
equations are studied to prove the equilibrium of games. For the
risk-sensitive stochastic games with continuous-time Markov chains,
Ghosh, Kumarb and Pal \cite{GKC-2016} study infinite-horizon
discounted and ergodic cost risk-sensitive zero-sum stochastic games
for countable Markov chains and bounded transition/cost rates, and
prove the existence of the value and saddle-point equilibrium in the
class of Markov strategies under suitable conditions.  For the case
of unbounded transition rates, Wei studies a zero-sum game with
continuous-time Markov jump processes under the risk-sensitive
finite-horizon criterion with bounded costs \cite{Wei18}. However,
there is no study about risk-sensitive stochastic games with
unbounded payoff rates and general spaces, which is still an open
problem as pointed out by \cite{Wei18}.

In this paper, we study a two-person zero-sum continuous-time
stochastic game with exponential utility function, where the state
and action spaces are Borel and the payoff rate, transition rates
and terminal reward functions are unbounded. We study this game in
the history-dependent policy space. We first establish the
corresponding Shapley equation for this game and study the existence
of the solution with an approximation technique. Then we further
prove the existence of the Nash equilibrium with Markov policies by
utilizing the Shapley equation and the extension of the Dynkin's
formula. We also verify that the value of the game is the unique
solution to the Shapley equation. Moreover, we develop a value
iteration-type algorithm to approach to the solution to the Shapley
equation, by iteratively solving a series of matrix games with
linear programming techniques. Finally, we give two examples to
demonstrate the main results of our paper. The main contribution of
this paper is that it is the first work to study the risk-sensitive
stochastic game with Borel spaces and unbounded transition and
payoff rates, which is the most general model compared with the
literature work like \cite{BU-17, Wei18} and answers the open
problem raised by \cite{Wei18}. Another noteworthy contribution is
that an iteration algorithm is developed in this paper to compute
the Nash equilibrium of the risk-sensitive stochastic game, which is
important while rarely presented in the literature work. We also
guarantee the convergence of the iteration algorithm by a special
contraction operator  in the risk-sensitive stochastic game. Since
the contraction operator in our game model is not a one-step
contraction operator widely used in discounted MDPs, the convergence
proof is not trivial and may shed some light on the computation
issues of risk-sensitive stochastic games.

The rest of the paper is organized as follows. In
Section~\ref{section2}, we introduce the model formulation of
risk-sensitive continuous-time stochastic games (CTSGs). After
giving technical preliminaries in Section~\ref{section3}, we derive
the main results of this paper in Section~\ref{section4}. In Section
\ref{sectional}, we develop a value iteration-type algorithm for
solving the value and the Nash equilibrium of the game and prove the
convergence. In Section~\ref{section5}, we verify the main results
by two examples. Finally, we conclude this paper in
Section~\ref{section6}.

\section{The model of stochastic games}\label{section2}


\textbf{Notation:} For any Borel space $X$ endowed with the Borel
$\sigma$-algebra ${\cal B}(X)$,  we will denote  by  $I_E$ the
indicator function on any subset $E$ of $X$, by $\delta_z(dx)$ the
Dirac measure  at point $z\in X$,  by  $\mathbb{B}_1(X)$ the set of
all bounded Borel measurable functions $u$ on $X$ with the norm
$\|u\|:=\sup_{x\in X}|u(x)|$, and  by $P(X)$ the set of all
probability measures on  ${\cal B}(X)$.

The model of two-person zero-sum risk-sensitive stochastic games for
continuous-time Markov chains is a five-tuple as below.
\begin{equation}
\mathbb{M}:=\{S, (A,A(x)\in {\cal B}(A),x\in S),(B,B(x)\in {\cal B}(B),x\in S), r(x,a,b),   q(dy|x,a,b), {{g(x)}}\}, \label{m-2.1}
\end{equation}
consisting of the following elements:
\begin{itemize}
\item[(a)] a Borel space $S$, called the state space of the games;

\item[(b)]  Borel spaces $A$  and $B$ for players 1 and 2 respectively, called the action spaces of the players in the games;  $A(x)$ and $B(x)$ denote the sets of actions   available to players 1 and 2 respectively  when the system is at state $x\in S$;

\item[(c)] a Borel measurable function $r(x,a,b)$ on $K$, called the payoff rate, where $K:=\{(x,a,b)|x\in S,a\in A(x),b\in B(x)\}$ is assumed in $\mathcal{B}(S\times A\times B);$ that is, $r(x,a,b)$ is the reward rate for player~1 and the cost rate for player~2;

\item[(d)] transition rates $q(dy|x,a,b)$, a Borel  signed
{{measure}} on ${\cal B}(S)$ given $K$, satisfying that $q(D|x,a,b)\geq 0$ for all $D\in {\cal B}(S)$ with $(x,a,b)\in K$  and $x\not\in D$, being conservative in the sense of
$q(S|x,a,b)\equiv 0$, and stable in that of
\begin{eqnarray}
q^{*}(x):=\sup_{a\in A(x),b\in B(x)}q(x,a,b)<\infty,  \ \ \ \
\forall \ x\in S,  \label{Q}
\end{eqnarray}
where $q(x,a,b):=-q(\{x\}|x,a,b)\geq0$ for any $(x,a,b)\in K$;

{\item[(e)] the real-valued terminal reward function $g(x)$ is measurable on $S$.}
\end{itemize}

Next, we give an informal description of the evolution of CTSGs with
model (\ref{m-2.1}).

Roughly speaking, CTSGs evolve as follows: Two players observe
states of a system continuously in time. If the system is at state
$x_t$ at time $t$, player~1 chooses an action $a_t\in A(x_t)$
according to  a given policy, player~2 chooses an action $b_t\in
B(x_t)$ according to a given policy simultaneously, as a consequence
of which, the following happens:

\rm(i)\ An  payoff for player~1 takes place at the rate
$r(x_t,a_t,b_{t})$;
\par
\rm(ii)\ After a random sojourn time (i.e., the holding time at
state $x_t$), the system jumps to a set $D$ ($x_{t}\not\in D$) of
states with the transition probability
$\frac{q(D|x_t,a_t,b_{t})}{q(x_t,a_t,b_{t})}$ determined by the
transition rates $q(dy|x_t,a_t,b_{t})$.  The
distribution function of the sojourn time is
$(1-e^{-\int_t^{t+\delta}q(x_s,a_s,b_s)ds})$, where $\delta$ is the
sojourn time at state $x_t$.

To formalize what is described above, below we describe the
construction of CTSGs under possibly randomized history-dependent
policies.

To construct the process of the underlying dynamic game, we
introduce some notations: Let
$\Omega_0:=(S\times(0,\infty))^{\infty}$,
$\Omega_k:=(S\times(0,\infty))^k\times S\times (\{\infty\}\times
\{\Delta\})^\infty$ for $k\geq 1$ and some $\Delta\not\in S$,
$\Omega:=\cup_{k=0}^\infty\Omega_k$,  $\mathcal{F}$  the Borel
$\sigma$-algebra on the Borel space $\Omega$. Then, we obtain the
measurable space $(\Omega,\mathcal{F})$. For each $k\geq 1$, and
sample
$\omega:=(x_{0},\delta_{1},x_{1},\ldots,\delta_{k},x_{k},\ldots) \in
\Omega$, define
\begin{equation}\label{eq:tk}
T_{k}(\omega):=\delta_{1}+\delta_{2}+\ldots+\delta_{k}, \
T_{\infty}(\omega):=\lim_{k\rightarrow\infty}T_{k}(\omega),  \ {\rm
and}  \  X_k(\omega):=x_k.
\end{equation}
In what follows, the argument $\omega$ is always omitted except some
special informational statements. Then, we define the state process
$\{x_t,t\geq 0\}$ on $(\Omega,\mathcal{F})$ by
\begin{eqnarray}
 x_t:=\sum_{k\geq0}I_{\{T_{k}\leq t<T_{k+1}\}}X_{k}+ I_{\{t\geq T_\infty\}}\Delta
 \label{T_k}, \ \ \ \ {\rm for } \ t\geq 0, \ \ ({\rm with} \ T_{0}:=0).
\end{eqnarray}
Obviously, $x_{t}(\omega)$ is right-continuous on $[0,\infty)$. We
denote $x_{t-}(\omega):=\lim_{s\to t-}x_{s}(\omega)$. Here we have
used the convenience that  $0\times z=0$ and $0+z=z$ for all $z\in
S_\Delta:=S\cup\{\Delta\}$.

For each fixed
$\omega=(x_{0},\delta_{1},x_{1},\ldots,\delta_{k},x_{k},\ldots)\in\Omega$,
from (\ref{eq:tk}), we see that $T_{k}(\omega)$ ($k\geq 1$) denotes
the $k$-th jump moment of $\{x_t,t\geq 0\}$,
$X_{k-1}(\omega)=x_{k-1}$ is the state of the   process on
$[T_{k-1}(\omega),T_k(\omega))$,
$\delta_{k}=T_{k}(\omega)-T_{k-1}(\omega)$ plays the role of sojourn
time at state $x_{k-1}$, and the sample path $\{x_t(\omega), t\geq
0\}$  has at most denumerable states $x_k$, $k=0,1,\ldots$. We do
not intend to consider the controlled process $\{x_t,t\geq 0\}$
after moment $T_\infty$, and thus view it to be absorbed in the
cemetery state $\Delta$. Hence, we write
$A_\Delta:=A\cup\{a_\Delta\}$, $B_\Delta:=B\cup\{a_\Delta\}$,
$A(\Delta):=\{a_\Delta\}$, $B(\Delta):=\{b_\Delta\}$,
$q(\cdot|\Delta,a_\Delta,b_\Delta):\equiv 0$,
$r(\Delta,a_\Delta,b_\Delta):\equiv 0$,  where $a_\Delta$ and
$b_\Delta$ are isolated points.

To precisely define the optimality criterion, we need to introduce
the concept of a policy for each player below, which is an
equivalent expression of that in \cite{GHS2012,GuoABP:2010,PZ2011a}.
\begin{defn} \label{hdp}
A \emph{(history-dependent) policy $\pi_1$} for player~1 is
determined by a sequence $\{\pi_1^k,k\geq 0\}$   such that,  for
$t\geq 0$ and
$\omega=(x_{0},\delta_{1},x_{1},\ldots,\delta_{k},x_{k},\ldots) \in
\Omega$,
\begin{eqnarray*}
\pi_1(da|\omega,t)&=&I_{\{0\}}(t)\pi^{0}_1(da|x_{0},0)+\sum_{k\geq0}I_{\{T_{k}<t\leq T_{k+1} \}}\pi_1^{k}(da|x_{0},\delta_{1},x_{1},\ldots,\delta_{k},x_{k},t-T_{k}) \nonumber \\
&&  + I_{\{t\geq T_{\infty}\}}\delta_{a_\Delta}(da), \label{pi}
\end{eqnarray*}
where  $\pi_1^{0}(da|x_0,0)$ is a stochastic kernel on $A$ given
$S$, $\pi_1^{k} (k\geq 1)$ are stochastic kernels on $A$ given
$(S\times (0,\infty))^{k+1}$, such that
$\pi_1^k(A(x_k)|\cdot)\equiv1$ for all $k\geq 0$.
\end{defn}

A policy $\pi_1( da |\omega,t)$ is called Markovian if the
corresponding kernels $\pi_1^k$ satisfy that
$\pi_1^k(da|x_0,\delta_{1},x_{1},\ldots,\delta_{k},x_{k},t-T_{k})=:\pi_1^k(da|x_k,t)$
(depending only on the current states $x_k$ and time $t$) for all
$k=0,1,\ldots.$ We  denote such a  Markov policy by
$\pi_1(da|x,t)$ for informational implication. Note that $\pi_1( da
|\omega,t)$ and $\pi_1(da|x,t)$ are not time-homogeneous policies.

We denote by $\Pi_1$ and $\Pi_1^{m}$ the sets of all
history-dependent policies $\pi_1$ and Markov ones respectively for
player~1. The corresponding sets $\Pi_2$ and $\Pi_2^m$ of all history-dependent policies  and all
Markov policies  for player~2 are respectively  defined similarly, with
$B(x)$ in lieu of $ A(x)$.

For any initial distribution $\gamma$ on  $S$ and pair of policies
$(\pi_1,\pi_2)\in \Pi_1\times \Pi_2$, as shown by  the Ionescu
Tulcea theorem (e.g., Proposition 7.45 in \cite{BS96}), we see that
there exists a  unique probability measure
$\mathbb{P}_{\gamma}^{\pi_1,\pi_2}$ (depending on $\gamma$ and
$(\pi_1,\pi_2)$)  on $(\Omega,\mathcal{F})$. Let
$\mathbb{E}_\gamma^{\pi_1,\pi_2}$ be the corresponding expectation
operator. In particular, $\mathbb{E}_\gamma^{\pi_1,\pi_2}$ and
$\mathbb{P}_\gamma^{\pi_1,\pi_2}$ are  respectively written as
$\mathbb{E}_x^{\pi_1,\pi_2}$ and $\mathbb{P}_x^{\pi_1,\pi_2}$ when
$\gamma$ is the Dirac measure at an initial state $x$ in $S$.

Fix any finite horizon $T>0$. For each pair of policies
$(\pi_1,\pi_2)\in \Pi_1\times \Pi_2$ and state $x\in S$, we define
the $T$-horizon \emph{risk-sensitive} value $J(\pi_1,\pi_2,0,x)$
of the continuous-time dynamic game by
\begin{eqnarray}
J(\pi_1,\pi_2,0,x):=\mathbb{E}_{\gamma}^{\pi_1,\pi_2}\left[e^{\theta\int_{0}^{T}\int_{A\times
B}r(x_{t},a,b)\pi_1(da|\omega,t)\pi_2(db|\omega,t)dt+{\theta g(x_{T})}}|x_0=x\right],
\label{C-1}
\end{eqnarray}
provided that the integral is well defined, where $\theta$ is a
constant called the risk-sensitive parameter.  In the following
arguments, we assume that $\theta>0$ which indicates a risk-averse
preference. For the other case of $\theta<0$ with risk-seeking
preference, the corresponding results can be obtained with $r$ being replaced by $-r$, and thus the similar arguments are
omitted.

 Note that the process $\{x_t,t\geq 0\}$ on $(\Omega,\mathcal{F}, \mathbb{P}^{\pi_{1},\pi_{2}}_\gamma)$  may {\it not} be Markovian since the  policies $\pi_{1}$ or $\pi_{2}$ can depend on histories
  $(x_{0},\delta_{1},x_{1},\ldots,\delta_{k},x_{k})$. However,
  for each   $\pi_{1} \in \Pi^{m}_{1}$ and $\pi_{2} \in \Pi^{m}_{2}$, it is well known (e.g. \cite{F14}) that  $\{x_t,t\geq 0\}$  is a Markov process on $(\Omega,\mathcal{F}, \mathbb{P}^{\pi_{1},\pi_{2}}_\gamma)$, and thus  for each  $x\in S$ and $t\in [0,T]$, the following expression
\begin{equation}J(\pi_1,\pi_2,t,x):=\mathbb{E}_\gamma^{\pi_1,\pi_2}\left[e^{\theta\int_t^T\int_{A\times B}r(x_s,a,b)\pi_1(da|x_s,s)\pi_2(db|x_s,s)ds+
{\theta g(x_{T})}}|x_t=x\right], \label{eqtx}\end{equation}
for $ \pi_1\in\Pi_1^m,\  \pi_2 \in \Pi_2^m,$
 is  well defined (when the integral exists), and it is called the risk-sensitive value of policy pair $(\pi_{1},\pi_{2})$ from the horizon $t$ to $T$.

 As is well known, the functions on $S$ defined as
\begin{eqnarray*}
 L(x):=\sup_{\pi_1\in \Pi_1} \inf_{\pi_2\in
\Pi_2}J(\pi_1,\pi_2,0,x), \ \   \ {\rm and} \ \ U(x):=\inf_{\pi_2\in \Pi_2}
\sup_{\pi_1\in \Pi_1}J(\pi_1,\pi_2,0,x) \label{3.2}
\end{eqnarray*}
are called the {\it lower value} and the {\it upper value} of the
stochastic game, respectively. It is clear that
\begin{eqnarray*}
L(x)\leq U(x), \ \quad \forall \ x\in S. \label{3.3}
\end{eqnarray*}

\begin{defn}   If $L(x)= U(x)$ for all $x\in S$, then the
common function is called the {\bf value function} of the game model
$\mathbb{M}$ and is denoted by $\mathbb{M}(x)$.
\end{defn}

 \begin{defn}   Suppose that the game $\mathbb{M}$ has a value function
$\mathbb{M}(x)$. Then a policy $\pi_1^*$ in $\Pi_1$ is said
 to be optimal for player~1 if
 \begin{eqnarray*}
 \inf_{\pi_2 \in \Pi_2} J(\pi_1^*,\pi_2,0,x)=\mathbb{M}(x), \ \ \ \
 \forall \ x\in S. \label{4.3}
\end{eqnarray*}
Similarly, $\pi_2^* \in \Pi_2$ is optimal for player~2 if
\begin{eqnarray*}
 \sup_{\pi_1 \in \Pi_1} J(\pi_1,\pi_2^*,0,x)=\mathbb{M}(x), \ \ \ \
 \forall \ x\in S. \label{4.4}
\end{eqnarray*}
If $\pi_k^*\in \Pi_k$ is optimal for player $k$, $k=1,2$, then
$(\pi_1^*,\pi_2^*)$ is called a Nash equilibrium of the game.
\end{defn}
The aim of this paper is to give conditions for the existence and
the computation of a Nash equilibrium.

\section{Preliminaries}\label{section3}

This section provides  some preliminary facts for our arguments
below. Since the transition rate $q(dy|x,a,b)$ and payoff rate
$r(x,a,b)$ are allowed to be unbounded, we next give conditions for
the non-explosion of $\{x_t,t\geq 0\}$ and finiteness of
$J(\pi_1,\pi_2,0,x)$.

\begin{ass}\label{ass:3.1}
{\rm There exist a real-valued Borel measurable function $V_0(x)\geq 1$ on $S$ and  positive constants
$\rho_0,  L_0, M_0$, such that
\begin{description}
\item[(i)] \  $\int_{S}V_0(y)q(dy|x,a,b)\leq \rho_0 V_0(x)$ \emph{\rm {for all}} $(x,a,b)\in K$;

\item[(ii)] $q^*(x)\leq L_0V_0(x)$ for all $x\in S$, where $q^*(x)$ is as in (\ref{Q});

\item[(iii)]  $|r(x,a,b)|\leq M_0+\frac{\sqrt{2}}{2}\sqrt{\ln V_0(x)}$ for  all $(x,a,b)\in K$ and $|g(x)|\leq M_0+\frac{\sqrt{2}}{2}\sqrt{\ln V_0(x)} $
for  all $x\in S$.

\end{description}
}
\end{ass}


\begin{lem}\label{lem3.1} \rm Under Assumption  \ref{ass:3.1} (i,ii), for each $(\pi_1,\pi_2)\in\Pi_1\times\Pi_2$, $x\in S, t\geq 0$, the following assertions hold.
\begin{description}
\item[(a)]  $\mathbb{P}_{x}^{\pi_1,\pi_2}(x_{t}\in S)=1$, $\mathbb{P}_{x}^{\pi_1,\pi_2}(T_\infty=\infty)=1$, and $ \mathbb{P}_{x}^{\pi_1,\pi_2}(x_{0}=x)=1$.
\item[(b)] $\mathbb{E}_{x}^{\pi_1,\pi_2}[V_0(x_t)] \leq e^{\rho_0t}V_0(x)$, and $\mathbb{E}_{\gamma}^{\pi_1,\pi_2}[V_0(x_t)\big|x_s=x] \leq e^{\rho_0(t-s)}V_0(x)$  for  $t\geq s\geq 0$ when $(\pi_1,\pi_2)$ is in $\Pi^m_1\times\Pi_2^m$.
\item[(c)] If, in addition,  Assumption  \ref{ass:3.1} (iii) is satisfied, then for each $t\geq 0$
 \begin{description}
 \item[(c$_1$)]  $   e^{ -\theta \left[ T e^{\rho_{0}T}+M_{0}T+ e^{\rho_{0}T}+M_{0} \right]V_{0}(x) }\leq J(\pi_1,\pi_2,0,x)\leq LV_0(x)$,\\ where {$L:= e^{2T\theta(M_0+T\theta)+2\theta(M_0+\theta)+\rho_0 T};$}
\item[(c$_2$)]  $  e^{ -\theta \left[ T e^{\rho_{0}T}+M_{0}T+ e^{\rho_{0}T}+M_{0} \right]V_{0}(x) }\leq J(\pi_1,\pi_2,t,x)\leq LV_0(x)$ for $(\pi_1,\pi_2)\in \Pi^m_1\times\Pi_2^m$.
\end{description}
 \end{description}
\end{lem}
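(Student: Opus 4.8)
The plan is to prove the three assertions in the order (a) $\Rightarrow$ (b) $\Rightarrow$ (c), since each relies on its predecessor, and the backbone of (a) and (b) is a single Lyapunov estimate obtained \emph{before} explosion. First I would fix $n$ and work on the stochastic interval $[0,t\wedge T_n]$, on which the trajectory has at most $n$ jumps, so that Dynkin's formula (equivalently, the Kolmogorov integral equation for the minimal process constructed by Ionescu--Tulcea) applies to $V_0$ with no integrability obstruction. Using conservativeness of $q$ and Assumption \ref{ass:3.1}(i), the generator term $\int_S V_0(y)\,q(dy|x_s,a,b)$ is dominated by $\rho_0 V_0(x_s)$, which yields $\mathbb{E}_x^{\pi_1,\pi_2}[V_0(x_{t\wedge T_n})]\le V_0(x)+\rho_0\int_0^t \mathbb{E}_x^{\pi_1,\pi_2}[V_0(x_{s\wedge T_n})]\,ds$; Gronwall's inequality then gives the uniform bound $\mathbb{E}_x^{\pi_1,\pi_2}[V_0(x_{t\wedge T_n})]\le e^{\rho_0 t}V_0(x)$ for every $n$.

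To upgrade this to non-explosion (part (a)), I would bound the jump-counting process $N_t$, whose compensator is $\int_0^t q(x_s,a_s,b_s)\,ds$. Assumption \ref{ass:3.1}(ii) gives $q(x_s,a_s,b_s)\le q^*(x_s)\le L_0 V_0(x_s)$, so combining the martingale identity with the Lyapunov bound yields $\mathbb{E}_x^{\pi_1,\pi_2}[N_{t\wedge T_n}]\le L_0\int_0^t e^{\rho_0 s}V_0(x)\,ds<\infty$ uniformly in $n$. Letting $n\to\infty$ and using monotone convergence gives $\mathbb{E}_x^{\pi_1,\pi_2}[N_{t\wedge T_\infty}]<\infty$; since explosion ($T_\infty\le t$) forces $N_{t\wedge T_\infty}=\infty$, this finiteness forces $\mathbb{P}_x^{\pi_1,\pi_2}(T_\infty\le t)=0$ for every $t$, i.e.\ $\mathbb{P}_x^{\pi_1,\pi_2}(T_\infty=\infty)=1$ and hence $\mathbb{P}_x^{\pi_1,\pi_2}(x_t\in S)=1$; the identity $\mathbb{P}_x^{\pi_1,\pi_2}(x_0=x)=1$ is immediate from the construction. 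Once non-explosion holds, $x_{t\wedge T_n}\to x_t$ a.s., so Fatou's lemma transfers the Gronwall bound to the limit, giving (b); the conditional version for Markov pairs follows from the Markov property and a time shift.

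For (c) I would work directly with the exponent in (\ref{C-1}). For the upper bound, since $\theta>0$ I bound the integrand and terminal term above by Assumption \ref{ass:3.1}(iii), so that $J$ is dominated by $e^{\theta M_0(T+1)}$ times the expectation of $\exp\!\big(\theta\frac{\sqrt2}{2}\int_0^T\sqrt{\ln V_0(x_t)}\,dt\big)\exp\!\big(\theta\frac{\sqrt2}{2}\sqrt{\ln V_0(x_T)}\big)$. I would separate the running and terminal factors by the Cauchy--Schwarz inequality, apply Jensen's inequality to interchange the exponential with the normalized time-average in the running factor, and linearize each term via the elementary inequality $c\sqrt{\ln v}\le \frac{c^2}{2}+\frac12\ln v$ (valid for $v\ge1$); this is exactly what the $\sqrt{\ln V_0}$ growth with the constant $\frac{\sqrt2}{2}$ is tuned to make integrable, since it turns $\exp(c\sqrt{\ln V_0})$ into a constant times a power of $V_0$ whose expectation is controlled by (b). Collecting constants produces $J\le LV_0(x)$ with $L$ as stated. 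For the lower bound I would instead use Jensen in the form $\mathbb{E}[e^Y]\ge e^{\mathbb{E}[Y]}$, bound the exponent below by (iii), and estimate $\mathbb{E}_x^{\pi_1,\pi_2}[\sqrt{\ln V_0(x_t)}]$ from above using the crude inequality $\sqrt{\ln v}\le v$ together with (b) and $V_0\ge1$ to factor out $V_0(x)$; this reproduces the exponential lower bound in (c$_1$). Part (c$_2$) is identical, using the conditional moment bound of (b) in place of the unconditional one.

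The main obstacle is two-fold. The delicate step in (a) is justifying the passage $n\to\infty$: Dynkin's formula is legitimate only before explosion, so every estimate must first be proved uniformly on $[0,t\wedge T_n]$ and then transferred by monotone/Fatou arguments, and this is where Assumption \ref{ass:3.1}(ii) is essential, as (i) alone bounds moments but does not by itself rule out explosion. The delicate step in (c) is the integrability of the exponential running payoff: a naive linear growth bound on $r$ would make $\mathbb{E}[\exp(\theta\int_0^T r\,dt)]$ diverge, and it is precisely the sublinear $\sqrt{\ln V_0}$ growth with the tuned constant that, after Cauchy--Schwarz and the AM--GM linearization, keeps the exponential comparable to a fixed power of $V_0$.
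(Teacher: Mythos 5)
Your proposal is correct, and for the substantive part (c) it is essentially the paper's own argument: the same three ingredients (Jensen with respect to the normalized measure $dt/T$, Cauchy--Schwarz, and the AM--GM linearization $c\sqrt{\ln v}\le \tfrac{c^2}{2}+\tfrac12\ln v$ that turns $e^{c\sqrt{\ln V_0}}$ into a constant times $\sqrt{V_0}$) combined with the moment bound of (b), and the identical Jensen-plus-crude-bound ($\sqrt{\ln v}\le v$, $V_0\ge 1$) route for the lower bound. The only difference in (c) is ordering: you apply Cauchy--Schwarz first to split the running and terminal factors and then Jensen inside the running factor, whereas the paper folds $|g(x_T)|$ into the time integral as $\int_0^T\tfrac1T|g(x_T)|dt$, applies Jensen once, and only then uses Cauchy--Schwarz on $\mathbb{E}[\sqrt{V_0(x_t)}\sqrt{V_0(x_T)}]$; both orderings yield the bound $LV_0(x)$ (yours with slightly smaller constants), so this is cosmetic. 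The more noticeable divergence is in (a) and (b): the paper does not prove these at all but cites the literature (\cite{G07,GHS2012,GuoABP:2010,GuoZhang20,PZ2011a}), whereas you supply the standard argument behind those citations — localization at $T_n$, Dynkin/Gronwall for the drift bound, the compensator estimate $\mathbb{E}[N_{t\wedge T_n}]\le L_0\int_0^t e^{\rho_0 s}V_0(x)\,ds$ via Assumption \ref{ass:3.1}(ii) to exclude explosion, and Fatou to pass to the limit. That is exactly the right mechanism and correctly identifies why (ii) is needed beyond (i); it simply makes explicit what the paper outsources.
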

\begin{proof}   Parts (a) and (b) follow from  any reference of  \cite{G07,GHS2012,GuoABP:2010,GuoZhang20,PZ2011a}.
We next prove  part (c).  Since $|r(x,a,b)|\leq
\frac{\sqrt{2}}{2}\sqrt{\ln V_0(x)}+M_0\leq T\theta+\frac{\ln
\sqrt{V_0(x)}}{2T\theta}+M_0$ and $|g(x)|\leq
\frac{\sqrt{2}}{2}\sqrt{\ln V_0(x)}+M_0\leq \theta+\frac{\ln
\sqrt{V_0(x)}}{2\theta}+M_0$, we have $e^{T\theta|r(x,a,b)|}\leq
e^{2T\theta|r(x,a,b)|}\leq e^{2T\theta(M_0+T\theta)}\sqrt{V_0(x)}$
and $e^{\theta|g(x)|}\leq e^{2\theta|g(x)|}\leq
e^{2\theta(M_0+\theta)}\sqrt{V_0(x)}$. Using the Jensen inequality
with respect to the probability measure $\frac{dt}{T}$ on
${\cal{B}}([0,T])$ and Cauchy-Buniakowsky-Schwarz Inequality, by (\ref{C-1}) we have
 \begin{eqnarray}\label{eq:3.1}
&&\mathbb{E}_{x}^{\pi_1,\pi_2}\left[e^{ \theta \int_0^T\int_{A\times B} r(x_t,a,b)\pi_1(da|\omega,t)\pi_2(db|\omega,t)dt+\theta g(x_{T})}\right]\nonumber\\
&\le &\mathbb{E}_{x}^{\pi_1,\pi_2}\left[e^{ \theta \int_0^T\int_{A\times B} |r(x_t,a,b)|\pi_1(da|\omega,t)\pi_2(db|\omega,t)dt+\theta |g(x_{T})|}\right]\nonumber\\
&=&\mathbb{E}_{x}^{\pi_1,\pi_2}\left\{e^{ \theta \int_0^T\left[\int_{A\times B} |r(x_t,a,b)|\pi_1(da|\omega,t)\pi_2(db|\omega,t)+\frac{1}{T}|g(x_{T})|\right]dt}\right\}\nonumber\\
&=&\mathbb{E}_{x}^{\pi_1,\pi_2}\left\{e^{ \theta T \int_0^T
\frac{1}{T}\left[\int_{A\times B} |r(x_t,a,b)|\pi_1(da|\omega,t)\pi_2(db|\omega,t)+\frac{1}{T}|g(x_{T})|\right]dt}\right\}\nonumber\\
&\leq& \mathbb{E}_{x}^{\pi_1,\pi_2}\left\{\frac{1}{T} \int_0^T e^{T\theta\left[\int_{A\times B} |r(x_t,a,b)|\pi_1(da|\omega,t)\pi_2(db|\omega,t)+\frac{1}{T}|g(x_{T})|\right]}dt\right\} \nonumber \\
&\leq& \mathbb{E}_{x}^{\pi_1,\pi_2}\left\{\frac{1}{T} \int_0^T e^{2T\theta(M_0+T\theta)}\sqrt{V_0(x_{t})}e^{2\theta(M_0+\theta)}\sqrt{V_0(x_{T})}dt\right\} \nonumber \\
 &=& \frac{1}{T} e^{2T\theta(M_0+T\theta)+2\theta(M_0+\theta)}\int_0^T\mathbb{E}_{x}^{\pi_1,\pi_2}\left[\sqrt{V_0(x_{t})}\sqrt{V_0(x_{T})}\right]dt \nonumber \\
  &\le & \frac{1}{T} e^{2T\theta(M_0+T\theta)+2\theta(M_0+\theta)}\int_0^T\sqrt{\mathbb{E}_{x}^{\pi_1,\pi_2}V_0(x_{t})\mathbb{E}_{x}^{\pi_1,\pi_2}V_0(x_{T})}dt \nonumber \\
  &\le & \frac{1}{T} e^{2T\theta(M_0+T\theta)+2\theta(M_0+\theta)}\int_0^T\sqrt{e^{\rho_{0}t}V_0(x)e^{\rho_{0}T}V_0(x)}dt \nonumber \\
   &\le & \frac{1}{T} e^{2T\theta(M_0+T\theta)+2\theta(M_0+\theta)}\int_0^T e^{\rho_{0}T}V_0(x)dt \nonumber \\
 &\leq&   e^{2T\theta(M_0+T\theta)+2\theta(M_0+\theta)+\rho_0 T}V_0(x).  \label{eq:3.4}
 \end{eqnarray}

  On the other hand, 
 we have
  \begin{eqnarray*}
&&\mathbb{E}_{x}^{\pi_1,\pi_2}\left[e^{ \theta \int_0^T\int_{A\times B} r(x_t,a,b)\pi_1(da|\omega,t)\pi_2(db|\omega,t)dt+\theta g(x_{T})}\right]\nonumber\\
&\ge&e^{\mathbb{E}_{x}^{\pi_1,\pi_2} \left[\theta \int_0^T\int_{A\times B} r(x_t,a,b)\pi_1(da|\omega,t)\pi_2(db|\omega,t)dt+\theta g(x_{T})\right]}\nonumber\\
&\ge&e^{\mathbb{E}_{x}^{\pi_1,\pi_2} \left[-\theta \int_0^T\int_{A\times B} |r(x_t,a,b)|\pi_1(da|\omega,t)\pi_2(db|\omega,t)dt-\theta |g(x_{T})|\right]}\nonumber\\
&\ge&e^{\mathbb{E}_{x}^{\pi_1,\pi_2} \left[-\theta \int_0^T\int_{A\times B} (\ln\sqrt{V_{0}(x_{t})}+M_{0})\pi_1(da|\omega,t)\pi_2(db|\omega,t)dt-\theta (\ln\sqrt{V_{0}(x_{T})}+M_{0})\right]}\nonumber\\
&\ge&e^{\mathbb{E}_{x}^{\pi_1,\pi_2} \left[-\theta \int_0^T (\sqrt{V_{0}(x_{t})}+M_{0})dt-\theta (\sqrt{V_{0}(x_{T})}+M_{0})\right]}\nonumber\\
&\ge&e^{\mathbb{E}_{x}^{\pi_1,\pi_2} \left[-\theta \int_0^T(V_{0}(x_{t})+M_{0})dt-\theta (V_{0}(x_{T})+M_{0})\right]}\nonumber\\
&=&e^{ \left[-\theta \int_0^T(\mathbb{E}_{x}^{\pi_1,\pi_2}V_{0}(x_{t})+M_{0})dt-\theta (\mathbb{E}_{x}^{\pi_1,\pi_2}V_{0}(x_{T})+M_{0})\right]}\nonumber\\
&\ge &e^{ \left[-\theta \int_0^T(e^{\rho_{0}t}V_{0}(x)+M_{0})dt-\theta (e^{\rho_{0}T}V_{0}(x)+M_{0})\right]}\nonumber\\
&\ge&e^{ -\theta\left[ T e^{\rho_{0}T}V_{0}(x)+M_{0}T+ e^{\rho_{0}T}V_{0}(x)+M_{0} \right] }\nonumber\\
&\ge&e^{ -\theta \left[ T e^{\rho_{0}T}+M_{0}T+ e^{\rho_{0}T}+M_{0} \right]V_{0}(x) },
 \end{eqnarray*}
which, together with (\ref{eq:3.4}), implies (c$_1$).  Similarly, we
see that (c$_2$) is also true.
\end{proof}

\begin{rem} \label{rln}It can be seen from the proof, when $g(x)=0$ on $S$ and item (iii) in Assumption \ref{ass:3.1} is weaken to  $|r(x,a,b)|\leq M_0+\frac{1}{2T\theta}\ln V_0(x)$,
item (c) in Lemma \ref{lem3.1} still holds.
\end{rem}

Lemma \ref{lem3.1} gives conditions for the  finiteness of
$J(\pi_1,\pi_2,t,x)$ as well as the non-explosion of $\{x_t,t\geq
0\}$.  In order to deal with the game problem  for history-dependent
policies, we need the {\it analog} of the Ito-Dynkin's formula in
\cite{GHH2015} for possible non-Markov processes $\{x_t,t\geq 0\}$
and functions  $\varphi(\omega,t,x)$ with an  additional element
$\omega\in \Omega$. To do so, we recall some   concepts. Take  the
right-continuous family of $\sigma$-algebras $\{{\cal F}_t\}_{t\ge
0}$ with ${\cal F}_t:=\sigma(\{T_k\leq s, X_k\in D\}: D\in
{\cal{B}}(S),s\leq t,k\geq 0)$. As in
\cite{GuoABP:2010,GS11,PZ2011a}, let $\cal P$ be the
$\sigma$-algebra of predictable sets on $\Omega\times[0,\infty)$
related to $\{{\cal F}_t\}_{t\ge 0}$, that is,
${\cal{P}}:=\sigma(B\times [0,\infty), C\times (s,\infty): B\in
{\cal{F}}_0, C\in {\cal{F}}_{s-}, s>0)$ with
${\cal{F}}_{s-}:=\bigvee_{t<s}{\cal{F}}_t:=\sigma({\cal{F}}_{t},t<s)$.
A real-valued function on $\Omega\times  [0,\infty)$ is called {\it
predictable} if it is measurable with respect to $\cal P$.

Denote by $m_{L}$ the Lebesgue's measure on $[0,T]$, and by
$\mathbb{B}_{\cal P}(\Omega\times [0,T]\times S)$ the set of
real-valued and ${\cal{P}}\times {\cal{B}}(S)$-measurable functions
$\varphi$  with the following features:   Given any $x\in S,
(\pi_1,\pi_2)\in\Pi_1\times\Pi_2$, and a.s. $\omega\in \Omega$ with
respect to $\mathbb{P}_{x}^{\pi_1,\pi_2}$, there exists a Borel
subset $E_{(\varphi,\omega,x,\pi_1,\pi_2)}$ (depending on the
$\varphi,\omega,x,(\pi_1,\pi_2)$) of $[0,T]$ such that the partial
derivative $\frac{\partial \varphi(\omega,t,x) }{\partial t}$  with respect to $t$
exists for every $t\in E_{(\varphi,\omega,x,\pi_1,\pi_2)}$ and
$m_{L}(E_{(\varphi,\omega,x,\pi_1,\pi_2)}^c)=0$. Obviously, if a
function $\varphi(\omega,t,x)$ in $\mathbb{B}_{\cal P}(\Omega\times
[0,T]\times S)$ is independent of $\omega$ (written as
$\varphi(t,x)$), then  the corresponding
$E_{(\varphi,\omega,x,\pi_1,\pi_2)}$ is independent of
$(\omega,\pi_1,\pi_2)$, which will be denoted by   $E_{(\varphi,x)}$
for simplicity.

Next we state the extension of the Dynkin's formula by Lemma
\ref{Th-3.2}. To do so, we introduce the following conditions and
notations.

\begin{ass}\label{ass:3.1b}
{\rm There exist a real-valued Borel measurable function  $V_1(x)  \geq 1$ on $S$, and positive  constants
$\rho_1, b_1$,  and  $M_1$,  such that
\begin{description}
\item[(i)] \  $\int_{S}V_1^2(y)q(dy|x,a,b)\leq \rho_1V_1^2(x)+b_1$ \emph{\rm {for all}} $(x,a,b)\in K$;

\item[(ii)] $V_0^2(x)\leq M_1V_1(x)$ for all $x\in S$, with $V_0(x)$ satisfying Assumption \ref{ass:3.1}.
\end{description}
}
\end{ass}

Assumption \ref{ass:3.1b} is used to give a domain for the Dynkin's formula below, and it is obviously satisfied when the transition rates are bounded  \cite{Ghosh14, KC-13, KC-15,  Yu77}.

 Given the $V_k (k=0,1)$  as in   Assumption \ref{ass:3.1b} and any Borel set $Z$, a real-valued function $\varphi$ on $Z\times S$ is called   $V_k$-bounded if the $V_k$-weighted norm
 of $\varphi$, $\|\varphi\|_{V_k}:=\sup_{(z,x)\in Z\times S}\frac{|\varphi(z,x)|}{V_k(x)}$, is finite. We denote by $\mathbb{B}_{V_k}(Z\times S)$ the Banach space
 of all $V_k$-bounded   functions on $Z\times S$. When $V_{k}(x)\equiv 1$ for all $x\in S$, $\mathbb{B}_{1}(Z\times S)$ is the space of all bounded functions. In particular, take $Z=\Omega\times [0,T]$ or $[0,T]$, we  define
 $$  \mathbb{B}_{V_0,V_1}^{1}(\Omega\times [0,T]\times S):=\left\{\varphi\in \mathbb{B}_{V_0}(\Omega\times [0,T]\times S)\cap\mathbb{B}_{\cal{P}}(\Omega\times [0,T]\times S)\mid \frac{\partial \varphi }{\partial t}\in \mathbb{B}_{V_1}(\Omega\times [0,T]\times S)\right\},$$
 and then
\begin{equation}  \mathbb{B}_{V_0,V_1}^{1}([0,T]\times S):=\left\{\varphi\in \mathbb{B}_{V_0}([0,T]\times S)\cap\mathbb{B}_{\cal{P}}( [0,T]\times S)\mid  \frac{\partial \varphi }{\partial t}\in \mathbb{B}_{V_1}([0,T]\times S)\right\}.\label{eq:B}\end{equation}

\begin{lem}\label{Th-3.2}
{\rm Suppose Assumptions \ref{ass:3.1} and \ref{ass:3.1b}   are satisfied. Then, for each  $(s,x)\in [0,T]\times S$, the following assertions  hold.
\begin{description}
\item[(a)] (The extension of the Dynkin's formula):  For every $(\pi_1,\pi_2)\in\Pi_1\times\Pi_2$ and ${\varphi}\in \mathbb{B}_{V_0,V_1}^{1}(\Omega\times[0,T]\times S)$,
\begin{eqnarray*}
 && \mathbb{E}_{x}^{\pi_1,\pi_2}\left[\int_{0}^{T}
\left(\varphi'(\omega,t,x_{t})+\int_{S}\int_{A\times B}{\varphi}(\omega,t,y)q(dy|x_t,a,b)\pi_1(da|\omega,t)\pi_2(db|\omega,t)\right)dt
\right] \\&& \ \ =\mathbb{E}_{x}^{\pi_1,\pi_2}\left[{\varphi}(\omega,T,x_{T})\right]-\mathbb{E}_{x}^{\pi_1,\pi_2}\left[{\varphi}(\omega,0,x)\right],
\end{eqnarray*}
where
$\{x_t,t\geq 0\}$ may be not Markovian since  policies $\pi_1$ and $\pi_2$ may depend on histories.
\item[(b)] (The Dynkin's formula):  For each $(\pi_1,\pi_2)\in\Pi_1^m\times \Pi_2^m$, and ${\varphi}\in \mathbb{B}_{V_0,V_1}^{1}([0,T]\times S)$,
\begin{eqnarray*}
&&\mathbb{E}_{\gamma}^{\pi_1,\pi_2}\left\{\int_{s}^{T}
\left[\left(e^{\int_s^t\theta
r(x_v,\pi_1^v,\pi_2^v)dv{}}{\varphi}(t,x_t)\right)'\right.\right.\\
&&\left.\left.+\int_Se^{\int_s^t\theta
r(x_v,\pi_1^v,\pi_2^v)dv{}}{\varphi}(t,y)q(dy|x_t,\pi_1^t,\pi_2^t)\right]dt\Big|x_s=x\right\}\\
&=&\mathbb{E}_{\gamma}^{\pi_1,\pi_2}\left[e^{\int_s^T \theta
r(x_t,\pi_1^t,\pi_2^t)dt{}}{\varphi}(T,x_{T})\Big|x_s=x\right]-{\varphi}(s,x),
\ \ \ \forall \ s\in [0,T],
\end{eqnarray*}
where, the notation $u(x,\pi_1^t,\pi_2^t)$
means that
\begin{equation*}
u(x,\pi_1^t,\pi_2^t):=\int_{A(x)}\int_{B(x)}
u(x,a,b)\pi_1(da|x,t)\pi_2(db|x,t), \ \ \ \forall \ x\in S, t\geq
0.\end{equation*}
\end{description}

}
\end{lem}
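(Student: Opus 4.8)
The plan is to prove part (a) first by a pathwise decomposition of the process $t\mapsto\varphi(\omega,t,x_t)$ combined with a compensation (martingale) argument, and then to obtain part (b) by running the same scheme on the exponentially weighted integrand. Since $\{x_t\}$ is piecewise constant in the spatial variable between the jump times $T_k$, I would write, for a.e. $\omega$,
$$\varphi(\omega,T,x_T)-\varphi(\omega,0,x_0)=\int_0^T\varphi'(\omega,t,x_t)\,dt+\sum_{k\ge1,\,T_k\le T}\big[\varphi(\omega,T_k,X_k)-\varphi(\omega,T_k,X_{k-1})\big],$$
where the integral collects the continuous-in-$t$ evolution on each sojourn interval (using the absolute continuity built into $\mathbb{B}^1_{\cal P}$) and the sum collects the spatial jumps. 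This jump sum equals $\int_0^T\int_S[\varphi(\omega,t,y)-\varphi(\omega,t,x_{t-})]\,\mu(dt,dy)$ for the jump random measure $\mu$, whose $\mathbb{P}_x^{\pi_1,\pi_2}$-compensator on $\Omega\times[0,T]\times S$ is $\big(\int_{A\times B}q(dy|x_t,a,b)\pi_1(da|\omega,t)\pi_2(db|\omega,t)\big)\,dt$.

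First I would establish the identity on the localized horizon $T\wedge T_n$ (equivalently, keeping only the first $n$ jumps), where the sum is finite and the manipulations are elementary. Taking $\mathbb{E}_x^{\pi_1,\pi_2}$ and replacing $\mu$ by its compensator turns the jump sum into $\mathbb{E}_x^{\pi_1,\pi_2}\int_0^{T\wedge T_n}\int_S[\varphi(\omega,t,y)-\varphi(\omega,t,x_t)]\int_{A\times B}q(dy|x_t,a,b)\pi_1\pi_2\,dt$; here conservativeness $q(S|x,a,b)\equiv0$ kills the $\varphi(\omega,t,x_t)$ term, since $\int_S\varphi(\omega,t,x_t)q(dy|x_t,a,b)=\varphi(\omega,t,x_t)q(S|x_t,a,b)=0$, leaving exactly the $q$-integral in the statement. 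This is the structural heart of the argument.

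The remaining work is the limit $n\to\infty$. By Lemma \ref{lem3.1}(a), $T_n\uparrow T_\infty=\infty$ a.s., so $T\wedge T_n\uparrow T$, and I would pass to the limit by dominated convergence after producing an integrable majorant. The key estimate is that, since $\varphi\in\mathbb{B}_{V_0}$ and $q$ is conservative and stable, the total-variation bound coming from Assumption \ref{ass:3.1}(i,ii) gives $\int_S|\varphi(\omega,t,y)|\,|q|(dy|x_t,a,b)\le C\,V_0^2(x_t)\le CM_1V_1(x_t)$ via Assumption \ref{ass:3.1b}(ii); the $V_1^2$-Lyapunov inequality of Assumption \ref{ass:3.1b}(i) then bounds $\mathbb{E}_x^{\pi_1,\pi_2}\int_0^T V_1(x_t)\,dt$ (using $\mathbb{E}V_1\le\sqrt{\mathbb{E}V_1^2}$), dominating both the jump and compensator terms uniformly in $n$; the boundary terms converge by Lemma \ref{lem3.1}(b) and right-continuity of $x_t$. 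This yields part (a). For part (b), with $(\pi_1,\pi_2)\in\Pi_1^m\times\Pi_2^m$, I would run the same decomposition-and-compensation scheme (shifting the start from $0$ to $s$) on the weighted function $\tilde\varphi(\omega,t,y):=e^{\int_s^t\theta r(x_v,\pi_1^v,\pi_2^v)dv}\varphi(t,y)$, whose $t$-derivative is $\tilde\varphi'(\omega,t,y)=e^{\int_s^t\theta r\,dv}[\theta r(x_t,\pi_1^t,\pi_2^t)\varphi(t,y)+\varphi'(t,y)]$; evaluating at $y=x_t$ reproduces $(e^{\int_s^t\theta r\,dv}\varphi(t,x_t))'$ on each sojourn interval, while the $q$-integral carries $e^{\int_s^t\theta r\,dv}$ outside, matching the claim.

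I expect the main obstacle to be exactly this uniform-integrability/limit step under unbounded rates, and sharpest in part (b), where the decomposition is purely pathwise but the exponential-of-payoff factor in $\tilde\varphi$ destroys the deterministic $V_0$-bound and must be controlled anew. Here I would reuse the Cauchy–Buniakowsky–Schwarz device from the proof of Lemma \ref{lem3.1}(c), splitting $\mathbb{E}[e^{\int_s^t\theta r\,dv}V_1(x_t)]\le\sqrt{\mathbb{E}e^{2\int_s^t\theta r\,dv}}\,\sqrt{\mathbb{E}V_1^2(x_t)}$ and bounding each factor through Assumptions \ref{ass:3.1}(iii) and \ref{ass:3.1b}(i). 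The entire purpose of the auxiliary $V_1$ with $V_0^2\le M_1V_1$ together with its $V_1^2$-Lyapunov bound is precisely to furnish this integrable majorant, so the real effort lies in the bookkeeping of these moment estimates rather than in the algebra of the decomposition.
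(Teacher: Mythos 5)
The paper does not actually prove Lemma \ref{Th-3.2}: its entire proof is the citation ``See Theorem 3.1 in \cite{GuoZhang20}.'' Your proposal therefore takes a genuinely different route in form --- you reconstruct a self-contained argument --- but in substance the mechanism you describe (pathwise decomposition of $t\mapsto\varphi(\omega,t,x_t)$ into sojourn-interval evolution plus jumps, rewriting the jump sum against the random measure $\mu$, compensating, localizing at $T\wedge T_n$, and passing to the limit under the Lyapunov conditions) is exactly the standard proof behind the cited theorem and its predecessors (cf.\ \cite{GHH2015}); so you are rederiving the outsourced proof rather than inventing a new one, and the derivation is sound. You also correctly isolate the two points where the hypotheses earn their keep: the $V_1$-majorant for the compensated integral (this is precisely the role of $V_0^2\le M_1V_1$ and the $V_1^2$-drift condition in Assumption \ref{ass:3.1b}), and, for part (b), the fact that the exponential weight makes $\tilde\varphi\notin\mathbb{B}^{1}_{V_0,V_1}(\Omega\times[0,T]\times S)$, so (b) is \emph{not} a literal corollary of (a) and needs the Cauchy--Schwarz splitting you propose. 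Two details would need tightening in a full write-up, though neither is a gap in the idea. First, the compensator of $\mu$ is built from the nonnegative kernel $\tilde q(dy|x,a,b):=q(dy|x,a,b)+q(x,a,b)\delta_x(dy)$, not from the signed kernel $q$ (a compensator cannot be a signed measure); this is harmless here only because your integrand $\varphi(\omega,t,y)-\varphi(\omega,t,x_{t-})$ vanishes at $y=x_{t-}$, so integration against $q$ and $\tilde q$ coincide --- but that remark should be made explicit. Second, convergence of the stopped boundary terms $\mathbb{E}_x^{\pi_1,\pi_2}[\varphi(\omega,T\wedge T_n,x_{T\wedge T_n})]$ requires uniform integrability, i.e.\ a bound on $\mathbb{E}_x^{\pi_1,\pi_2}[V_0^2(x_{T\wedge T_n})]\le M_1\mathbb{E}_x^{\pi_1,\pi_2}[V_1(x_{T\wedge T_n})]$ that is uniform in $n$ \emph{at the stopping times}; Lemma \ref{lem3.1}(b), which you invoke, controls only fixed times, so you must run the Gronwall/supermartingale estimate for $V_1$ along $T\wedge T_n$ as well --- this is part of the ``bookkeeping'' you flagged, and it is where the localized argument actually closes.
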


\begin{proof} See Theorem 3.1 in \cite{GuoZhang20}.
\end{proof}

\section{The existence of Nash equilibria}\label{section4}

In this section, we prove   the existence of  a Nash equilibrium and
a solution to the following Shapley equation (\ref{E-2})  for the
finite-horizon stochastic game with the risk-sensitive criterion.
The proofs are shown in three steps as follows:  1)   consider the
case of bounded transition and payoff rates, 2) deal with the case
of unbounded transition rates but nonnegative payoff, and 3) study
the case of unbounded transition   and   payoff rates.

 \begin{ass}\label{ass:3.3}
{\rm
\begin{description}
\item[(i)] For each $x\in S$, $A(x)$ and $B(x)$ are   compact;
\item[(ii)] For each $x\in S$ and $D\in {\cal{B}}(S)$, the function  $q(D|x,a,b)$  is continuous in $(a,b)\in A(x)\times B(x)$;

 \item[(iii)] For each $x\in S$, the functions $r(x,a,b)$ and  $\int_{S}V_0(y)q(dy|x,a,b)$ are  continuous in $(a,b)\in A(x)\times B(x)$,  with  $V_0(x)$ as in Assumption \ref{ass:3.1}.
 \end{description}
}
\end{ass}
The following results are for the case of the bounded transition and
bounded payoff rates.
\begin{prop}\label{P-4.1} {\rm Under Assumption \ref{ass:3.3} suppose that  $\|q\|:=\sup_{x\in S} q^*(x)$,\\ $\|r\|:=\sup_{x\in S,a\in A(x),b\in B(x) }|r(x,a,b)|$  and $\|g\|:=\sup_{x\in S}|g(x)|$ are finite. Then, the following assertions   hold.
\begin{description}
\item[(a)]   There exists a unique {$ \varphi$} in $\mathbb{B}^{1}_{1,1}([0,T]\times S)$ (that is, $V_{0}(x)=V_{1}(x)\equiv 1, x\in S$ in $(\ref{eq:B}$))  satisfying  the following {\it Shapley equation} for the risk-sensitive criterion of CTSGs on the finite horizon:
{
\begin{eqnarray}
   \begin{cases}
 \displaystyle   {\varphi}'(t,x)+\sup_{\phi\in P(A(x))}\inf_{\psi\in P(B(x))}\left[\theta r(x,\phi,\psi){\varphi}(t,x)+\int_{S}{\varphi}(t,y)q(dy|x,\phi,\psi)\right]=0, \\
    {\varphi}(T,x)=e^{\theta g(x)}, \label{E-2}
    \end{cases}
 \end{eqnarray}
for each $x\in S$ and   $t\in E_{({\varphi},x)}$ with $m_L(E_{({\varphi},x)}^c)=0$.}

\item[(b)] There exists a pair  of Markov policies  $(\hat\pi_1,\hat\pi_2)\in \Pi_1^m\times \Pi_2^m$ such that, for   $x\in S,t\in [0,T]$,
    \begin{eqnarray}
     -{\varphi}'(t,x)&=&\theta r(x,\hat\pi^t_1,\hat\pi^t_2){\varphi}(t,x)+\int_{S}{\varphi}(t,y)q(dy|x,\hat\pi^t_1,\hat\pi^t_2)  \label{S-1}\\
                    &=& \sup_{\phi\in P(A(x))}\left[\theta r(x,\phi_,\hat\pi^t_2){\varphi}(t,x)+\int_{S}{\varphi}(t,y)q(dy|x,\phi,\hat\pi^t_2)\right] \label{S-2}\\
                     &=& \inf_{\psi\in P(B(x))}\left[\theta r(x,\hat\pi^t_1,\psi){\varphi}(t,x)+\int_{S}{\varphi}(t,y)q(dy|x,\hat\pi^t_1,\psi)\right]. \label{S-3}
      \end{eqnarray}
\item[(c)] $\displaystyle{\varphi}(t,x)= \sup_{\pi_{1}\in \Pi_1^m}\inf_{\pi_2\in\Pi_2^m}J(\pi_1,\pi_2,t,x)=\inf_{\pi_2\in\Pi_2^m}\sup_{\pi_{1}\in \Pi_1^m}J(\pi_1,\pi_2,t,x)=J(\hat\pi_1,\hat\pi_2,t,x)$,\\  for  $(t,x)\in [0,T]\times S$.
\item[(d)] The value function $\mathbb{M}(x)$ exists and is equal to ${\varphi}(0,x)$ for each $x\in S$, and $(\hat\pi_1,\hat\pi_2)$ is a Nash equilibrium.

\item[(e)] If $r(x,a,b)\ge 0$ for all $(x,a,b)\in K$, then ${\varphi}(t,x)$ is decreasing in $t\in [0,T]$ for any given $x\in S$, which means that
${\varphi}'(t,x)\le 0, a.e.$.
  \end{description}

}
\end{prop}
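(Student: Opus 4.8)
The plan is to solve the Shapley equation (\ref{E-2}) as a terminal-value problem by a fixed-point argument and then to identify its solution with the game value through the Dynkin formula of Lemma \ref{Th-3.2}.

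For (a), I would recast (\ref{E-2}) in integrated form $\varphi(t,x)=e^{\theta g(x)}+\int_t^T \mathcal H[\varphi(s,\cdot)](x)\,ds$, where $\mathcal H[w](x):=\sup_{\phi\in P(A(x))}\inf_{\psi\in P(B(x))}\big[\theta r(x,\phi,\psi)w(x)+\int_S w(y)q(dy|x,\phi,\psi)\big]$. Since $\sup\inf$ is nonexpansive in the supremum norm, $|\theta r|\le\theta\|r\|$, and the total variation of $q(\cdot|x,a,b)$ is at most $2q^*(x)\le 2\|q\|$, the map $\mathcal H$ is Lipschitz on $\mathbb B_1(S)$ with constant $\theta\|r\|+2\|q\|$. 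Hence the integral operator is a contraction on $\mathbb B_1([0,T]\times S)$ under the weighted norm $\sup_{t,x}e^{-\lambda(T-t)}|\varphi(t,x)|$ for $\lambda$ large, producing a unique continuous fixed point; continuity of $s\mapsto\mathcal H[\varphi(s,\cdot)](x)$ then makes $\varphi$ continuously differentiable in $t$ with the bounded derivative $\varphi'=-\mathcal H[\varphi]$, so $\varphi\in\mathbb B^1_{1,1}([0,T]\times S)$, with measurability inherited from the Picard iterates. For (b), for fixed $(t,x)$ the payoff $(\phi,\psi)\mapsto\theta r(x,\phi,\psi)\varphi(t,x)+\int_S\varphi(t,y)q(dy|x,\phi,\psi)$ is bilinear (it integrates against the product measure $\phi\times\psi$) and continuous on the compact convex sets $P(A(x)),P(B(x))$ by Assumption \ref{ass:3.3}; Sion's minimax theorem yields $\sup\inf=\inf\sup$ together with a saddle point, and a measurable selection theorem provides $(\hat\pi_1,\hat\pi_2)\in\Pi_1^m\times\Pi_2^m$ attaining it, which is precisely the content of (\ref{S-1})--(\ref{S-3}).

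For (c), I would feed $\varphi$ into Lemma \ref{Th-3.2}(b) along a Markov pair. Differentiating the weight gives $\big(e^{\int_s^t\theta r\,dv}\varphi(t,x_t)\big)'=e^{\int_s^t\theta r\,dv}[\theta r\,\varphi+\varphi']$, so the Dynkin integrand becomes $e^{\int_s^t\theta r\,dv}\big[\varphi'(t,x_t)+\theta r\,\varphi(t,x_t)+\int_S\varphi(t,y)q(dy|x_t,\cdot)\big]$. By (\ref{S-1}) this vanishes for $(\hat\pi_1,\hat\pi_2)$, yielding $\varphi(s,x)=J(\hat\pi_1,\hat\pi_2,s,x)$; by (\ref{S-2}), (\ref{S-3}) and $\varphi>0$ the bracket is $\le0$ when player~1 deviates and $\ge0$ when player~2 deviates, so $J(\pi_1,\hat\pi_2,s,x)\le\varphi(s,x)\le J(\hat\pi_1,\pi_2,s,x)$; the sandwich $\varphi\le\sup\inf J\le\inf\sup J\le\varphi$ over Markov policies then gives (c). Part (d) requires pushing this to history-dependent opponents, where $\{x_t\}$ is not Markov; here I would apply the extended Dynkin formula Lemma \ref{Th-3.2}(a) to the $\omega$-dependent function $\tilde\varphi(\omega,t,x):=e^{\theta\int_0^t r(x_v,\pi_1^v,\hat\pi_2^v)\,dv}\varphi(t,x)$ (which lies in $\mathbb B^1_{1,1}(\Omega\times[0,T]\times S)$ in the bounded case), control the integrand's sign by (\ref{S-2}) applied to the mixed action $\pi_1(\cdot|\omega,t)$, and obtain $J(\pi_1,\hat\pi_2,0,x)\le\varphi(0,x)$ for every $\pi_1\in\Pi_1$; symmetrically $\inf_{\pi_2\in\Pi_2}J(\hat\pi_1,\pi_2,0,x)\ge\varphi(0,x)$. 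With $J(\hat\pi_1,\hat\pi_2,0,x)=\varphi(0,x)$ this forces $L(x)=U(x)=\varphi(0,x)=\mathbb M(x)$ and makes $(\hat\pi_1,\hat\pi_2)$ a Nash equilibrium.

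For (e), since $r,q,g$ are time-homogeneous the operator $\mathcal H$ is autonomous, and writing $\Psi(\tau,\cdot):=\varphi(T-\tau,\cdot)$ turns (\ref{E-2}) into the forward problem $\partial_\tau\Psi=\mathcal H\Psi$, $\Psi(0,\cdot)=e^{\theta g}$. The operator $\mathcal H$ is quasimonotone---if $w_1\le w_2$ with equality at $x$ then $\mathcal H w_1(x)\le\mathcal H w_2(x)$, because the off-diagonal part of $q$ is nonnegative---so its solution semiflow is order-preserving, and $\Psi(\tau,\cdot)$ is nondecreasing in $\tau$ (equivalently $\varphi'\le0$) once the base inequality $\mathcal H[\varphi(T,\cdot)]\ge0$ is established. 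I expect this base inequality to be the main obstacle: it asks that $\sup_\phi\inf_\psi[\theta r\,\varphi(T,\cdot)+\int_S\varphi(T,y)q(dy|\cdot)]\ge0$, where the running term $\theta r\,\varphi(T,\cdot)\ge0$ is favorable (using $r\ge0$, $\theta>0$, $\varphi>0$) but the transition term carries no definite sign in general; it becomes nonnegative precisely when the transition contribution cannot overcome the reward term (for instance when $\varphi(T,\cdot)$ is constant, so that $\int_S\varphi(T,y)q(dy|\cdot)=0$ and only the nonnegative reward term survives). Propagating this base inequality through the monotone semiflow then yields $\varphi'(t,x)\le0$ a.e., completing (e).
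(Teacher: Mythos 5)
Your parts (a)--(d) are correct and follow essentially the paper's own route. The paper likewise passes to the integrated operator $\Gamma\varphi(t,x)=e^{\theta g(x)}+\int_t^T\sup_{\phi}\inf_{\psi}[\cdots]\,ds$, except that it extracts the fixed point from the bound $\|\Gamma^n\varphi_1-\Gamma^n\varphi_2\|\le(\theta\|r\|+2\|q\|)^n T^n\|\varphi_1-\varphi_2\|/n!$ (a $k$-step contraction) where you use an equivalent Bielecki-type weighted norm, and it invokes Fan's minimax theorem plus Nowak's measurable selection theorem where you invoke Sion's; your proof of (c) via Lemma \ref{Th-3.2}(b) along Markov pairs, and of (d) via Lemma \ref{Th-3.2}(a) applied to the predictable weight $e^{\theta\int_0^t r(x_v,\pi_1^v,\hat\pi_2^v)dv}\varphi(t,x_t)$ against history-dependent deviations, is exactly what the paper does.

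Part (e) is where your proposal has a genuine gap, which you yourself flag: the comparison-principle argument never gets started, because the base inequality $\mathcal{H}[e^{\theta g}]\ge 0$ is not merely hard to verify, it is false in general. Take $S=\{1,2\}$ with trivial action sets, $r\equiv 0$ (so $r\ge 0$ holds), $g(1)=0$, $g(2)=1$, and let state $2$ jump to the absorbing state $1$ at rate $1$. Then $\varphi(t,2)=1+(e^{\theta}-1)e^{-(T-t)}$, which solves the Shapley equation with $\varphi(T,2)=e^{\theta}$ and is \emph{strictly increasing} in $t$. So with a non-constant terminal reward the monotonicity asserted in (e) itself fails, and no completion of your semiflow argument (or any other argument) exists without restricting $g$. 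The paper's own proof of (e) takes a different, probabilistic route that avoids any pointwise sign condition at the terminal datum: for $s<t$ it time-shifts Markov policies, $\pi_{1,s}^{t}(x,v):=\pi_1(x,v+t-s)$ for $v\ge s$, uses time-homogeneity of $(r,q)$ and the Markov property to identify $J_*(t\sim T,x)=J_*(s\sim T+s-t,x)$, and then asserts $J_*(s\sim T+s-t,x)\le J_*(s\sim T,x)$ because the extra running payoff over $[T+s-t,T]$ is nonnegative. Note, however, that this last comparison tacitly matches the terminal reward collected at time $T+s-t$ against the one collected at time $T$; it is airtight only when that contribution is state-independent, i.e. when $g$ is constant (in particular $g\equiv 0$, the setting of Remark \ref{rln}), and the two-state example above shows this restriction cannot be removed. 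So your diagnosis of where the obstacle sits is accurate; the constructive fix is to prove (e) under ``$r\ge 0$ and $g$ constant'' by the time-shift/horizon-monotonicity argument (pathwise $e^{\theta\int_s^{T+s-t}r\,dv}\le e^{\theta\int_s^{T}r\,dv}$, and $\sup\inf$ preserves pointwise inequalities), rather than via a sign condition on $\mathcal{H}$ at the terminal time.
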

The proof is rather long and is therefore presented in Appendix A.
\begin{rem}\label{remde}
{\rm   Even $r(x,a,b)\ge 0$ for all $(x,a,b)\in K$, it is not obvious that ${\varphi}(t,x)$ is decreasing in $t\in [0,T]$ for any given $x\in S$. Although
\begin{equation*}J(\pi_1,\pi_2,t,x)=\mathbb{E}_\gamma^{\pi_1,\pi_2}\left[e^{\theta\int_t^T\int_{A\times B}r(x_v,a,b)\pi_1(da|x_v,v)\pi_2(db|x_v,v)dv+
{\theta g(x_{T})}}|x_t=x\right]\end{equation*} and
\begin{equation*}J(\pi_1,\pi_2,s,x)=\mathbb{E}_\gamma^{\pi_1,\pi_2}\left[e^{\theta\int_s^T\int_{A\times
B}r(x_v,a,b)\pi_1(da|x_v,v)\pi_2(db|x_v,v)dv+ {\theta
g(x_{T})}}|x_s=x\right]\end{equation*} have the same initial state
and policies, their initial times $t$ and $s$ are different.
Therefore, it is not easy to compare $J(\pi_1,\pi_2,t,x)$ and
$J(\pi_1,\pi_2,s,x)$.
 }
\end{rem}

Proposition \ref{P-4.1} shows the existence of a Nash equilibrium under the bounded transition and payoff  rates.   We next extend the results in Proposition \ref{P-4.1} to the case of
unbounded transition rates  and nonnegative payoff rates by approximations.

\begin{prop}\label{Th-4.2} {\rm Under Assumptions \ref{ass:3.1}, \ref{ass:3.1b} and \ref{ass:3.3}, if in addition  $r(x,a,b)\geq 0$  for all $(x,a,b)\in K$
and  {$g(x)\geq 0$  for all $x\in S$}, then the following assertions  hold.

 \begin{description}
\item[(a)]   There exists a unique ${\varphi}$ in $\mathbb{B}^{1}_{V_0,V_1}([0,T]\times S)$ satisfying the following {\it Shapley equation} for the risk-sensitive criterion of CTSGs on the finite horizon:
\begin{eqnarray}
   \begin{cases}
   \displaystyle {\varphi}'(t,x)+\sup_{\phi\in P(A(x))}\inf_{\psi\in P(B(x))}\left[\theta r(x,\phi,\psi){\varphi}(t,x)+\int_{S}{\varphi}(t,y)q(dy|x,\phi,\psi)\right]=0, \\
    {\varphi}(T,x)={e^{\theta g(x)}}, \label{EEE-2}
    \end{cases}
 \end{eqnarray}
for each $x\in S$ and   $t\in E_{({\varphi},x)}$ with $m_L(E_{({\varphi},x)}^c)=0$.
\item[(b)]  $\displaystyle{\varphi}(t,x)= \sup_{\pi_{1}\in \Pi_1^m}\inf_{\pi_2\in\Pi_2^m}J(\pi_1,\pi_2,t,x)=\inf_{\pi_2\in\Pi_2^m}\sup_{\pi_{1}\in \Pi_1^m}J(\pi_1,\pi_2,t,x)$  for  $(t,x)\in [0,T]\times S$.
\item[(c)] The value function $\mathbb{M}(x)$ exists and equals to ${\varphi}(0,x)$, for any $x\in S$.
\end{description}

}

\end{prop}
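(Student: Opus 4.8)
The plan is to obtain Proposition~\ref{Th-4.2} from the bounded case (Proposition~\ref{P-4.1}) by a truncation-and-limit argument. For each $n\ge 1$ I would build a bounded model $\mathbb{M}_n$ agreeing with $\mathbb{M}$ on the ``small-rate'' region: set $r_n:=\min\{r,n\}$, $g_n:=\min\{g,n\}$, and cap the jump rate at level $n$ while keeping the jump distribution, i.e.\ replace the off-diagonal part of $q(\cdot|x,a,b)$ by $\frac{q(x,a,b)\wedge n}{q(x,a,b)}$ times itself and restore conservativeness with a self-loop at $x$. A short computation using $q^*(x)\le L_0V_0(x)$ shows that $\mathbb{M}_n$ again satisfies Assumptions~\ref{ass:3.1}, \ref{ass:3.1b} and \ref{ass:3.3} with the \emph{same} constants, while $r_n,g_n\ge 0$, $r_n\uparrow r$, $g_n\uparrow g$. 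Proposition~\ref{P-4.1} then yields, for each $n$, a solution $\varphi_n\in\mathbb{B}^1_{1,1}([0,T]\times S)$ of the Shapley equation for $\mathbb{M}_n$, a Markov saddle point, and the identification of $\varphi_n$ with the game value of $\mathbb{M}_n$.

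The next step is uniform control. Since $r_n,g_n\ge 0$ we have $\varphi_n\ge 1$, and because the constants in Assumption~\ref{ass:3.1} are $n$-independent, Lemma~\ref{lem3.1}(c) gives $\varphi_n(t,x)\le LV_0(x)$ with $L$ \emph{independent of} $n$. I would then show $\varphi_n\to\varphi$ pointwise, where $\varphi$ is the game value of $\mathbb{M}$. The cleanest route is to prove $J_n(\pi_1,\pi_2,t,x)\to J(\pi_1,\pi_2,t,x)$ \emph{uniformly} over policy pairs: couple the process of $\mathbb{M}_n$ to that of $\mathbb{M}$ so that they coincide until the first time the jump rate would exceed $n$; by non-explosion (Lemma~\ref{lem3.1}(a)) this time exceeds $T$ with probability tending to $1$, and on the complementary event both payoffs are dominated by $e^{\theta\int r+\theta g}$, whose expectation is at most $LV_0(x)$ uniformly in the policies. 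Uniform integrability via $\mathbb{E}_x^{\pi_1,\pi_2}[V_0(x_t)]\le e^{\rho_0 t}V_0(x)$ then forces the difference to $0$ uniformly, so $|\varphi_n-\varphi|=|\sup\inf J_n-\sup\inf J|\to 0$ and the upper and lower Markov values of $\mathbb{M}$ coincide with $\varphi$, which is part~(b).

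It remains to verify that $\varphi$ solves~(\ref{EEE-2}) and lies in $\mathbb{B}^1_{V_0,V_1}$. I would pass to the limit in the integral form $\varphi_n(t,x)=e^{\theta g_n(x)}+\int_t^T\sup_\phi\inf_\psi[\theta r_n\varphi_n(s,x)+\int_S\varphi_n(s,y)q_n(dy|x,\phi,\psi)]\,ds$. The delicate term is the signed integral against $q_n$, which I would split into its off-diagonal part, dominated via Assumption~\ref{ass:3.1}(i) by $L(\rho_0+q^*(x))V_0(x)$, and the atom $-\varphi_n(s,x)(q(x,\phi,\psi)\wedge n)$, controlled by $q^*(x)\le L_0V_0(x)$; both dominating bounds are $n$-free. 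Dominated convergence together with the compactness of $A(x),B(x)$ and the continuity from Assumption~\ref{ass:3.3} let me pass the limit through $\sup_\phi\inf_\psi$, so $\varphi$ satisfies the integral equation, is absolutely continuous in $t$, and solves~(\ref{EEE-2}) for a.e.\ $t$; the resulting expression for $\varphi'$ is $V_1$-bounded by Assumption~\ref{ass:3.1b}(i)--(ii), whence $\varphi\in\mathbb{B}^1_{V_0,V_1}$. Finally, a minimax measurable-selection argument as in Proposition~\ref{P-4.1}(b) produces a Markov pair $(\hat\pi_1,\hat\pi_2)$ attaining the $\sup\inf$ in~(\ref{EEE-2}); since $\varphi$ now lies in the domain of the Dynkin formula guaranteed by Assumption~\ref{ass:3.1b}, feeding it into Lemma~\ref{Th-3.2} and repeating the verification of Proposition~\ref{P-4.1}(c) gives $J(\pi_1,\hat\pi_2,0,x)\le\varphi(0,x)\le J(\hat\pi_1,\pi_2,0,x)$ for \emph{all} history-dependent $\pi_1,\pi_2$. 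This simultaneously yields the uniqueness in~(a) (any solution in $\mathbb{B}^1_{V_0,V_1}$ must equal the game value) and part~(c), that $\mathbb{M}(x)$ exists and equals $\varphi(0,x)$.

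The main obstacle is the limit as the transition rates become unbounded: capping the jump rate does \emph{not} order the models monotonically, so $\varphi_n$ cannot simply be squeezed, and $\int_S\varphi(t,y)q(dy|x,\phi,\psi)$ runs against a signed measure of unbounded total variation. Everything hinges on the weighted estimates—Assumption~\ref{ass:3.1}(i) to dominate the off-diagonal mass by $V_0$ and the policy-uniform bounds of Lemma~\ref{lem3.1} for uniform integrability—together with Assumption~\ref{ass:3.3} for interchanging the limit with the minimax and with the measurable selection.
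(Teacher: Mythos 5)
Your proposal departs from the paper's proof at the decisive step, and the difference is worth spelling out. The paper truncates the \emph{state space}: it sets $q_n(dy|x,a,b)=q(dy|x,a,b)$ for $x\in S_n=\{V_0\le n\}$ and $q_n\equiv 0$, $r_n^+\equiv 0$, $g_n^+\equiv 0$ off $S_n$, so that $r_n^+$ and $g_n^+$ increase in $n$ and, crucially, the truncated value functions $\varphi_n$ are \emph{monotone nondecreasing}; the monotonicity proof is exactly where Proposition \ref{P-4.1}(e) is consumed (on $S_n\setminus S_{n-1}$ the frozen dynamics reduce the required inequality to $\varphi_n'\le 0$), combined with a Dynkin-formula comparison. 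The limit then exists by monotone convergence, and only afterwards is it shown to solve the Shapley equation (the $H_n\to H$ argument with compact $P(A(x))$, $P(B(x))$ and Lemma 8.3.7 of \cite{one99}) and to equal the value. You instead cap the \emph{rates}, which, as you correctly note, destroys monotonicity, and you replace it by policy-uniform convergence $J_n\to J$ via coupling. If carried out, this is a legitimate and in some ways more direct architecture: part (b) falls out immediately, Proposition \ref{P-4.1}(e) is never needed, and your final passage to the Shapley equation and the verification step coincide with the paper's.

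However, the two uniformity claims on which your route rests are not justified by the facts you cite, and as written they are gaps. First, ``by non-explosion (Lemma \ref{lem3.1}(a)) this time exceeds $T$ with probability tending to $1$'' is a per-policy, qualitative statement; your coupling needs the quantitative, policy-uniform bound $\sup_{(\pi_1,\pi_2)}\mathbb{P}_x^{\pi_1,\pi_2}\bigl(\sup_{t\le T}q^*(x_t)>n\bigr)\to 0$, which follows neither from Lemma \ref{lem3.1}(a) nor from the fixed-time estimate $\mathbb{E}_x^{\pi_1,\pi_2}[V_0(x_t)]\le e^{\rho_0 t}V_0(x)$. It requires a maximal inequality, i.e.\ the supermartingale property of $e^{-\rho_0 t}V_0(x_t)$ under history-dependent policies plus optional stopping at the first entrance into $\{q^*>n\}$; this is obtainable from Assumption \ref{ass:3.1}(i) by the techniques behind Lemma \ref{lem3.1}, but it must be stated and proved. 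Second, ``uniform integrability via $\mathbb{E}_x^{\pi_1,\pi_2}[V_0(x_t)]\le e^{\rho_0 t}V_0(x)$'' is not enough: to make $\mathbb{E}\bigl[I_{E_n}\,e^{\theta\int_0^T r\,dt+\theta g(x_T)}\bigr]$ small on events of vanishing probability you need an exponential moment with margin, e.g.\ $\sup_{(\pi_1,\pi_2)}\mathbb{E}_x^{\pi_1,\pi_2}\bigl[e^{2\theta(\int_0^T r\,dt+ g(x_T))}\bigr]\le C\,V_0(x)$, followed by Cauchy--Schwarz. That bound does hold—Assumption \ref{ass:3.1}(iii) has square-root slack, so the proof of Lemma \ref{lem3.1}(c) survives with $\theta$ replaced by $2\theta$—but it is a separate estimate, not the one you invoked. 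With these two repairs (and a rigorous statement that the path laws of the capped and original models agree up to the first exit from $\{q^*\le n\}$), your argument closes.
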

The proof is rather long and is therefore presented in Appendix B.

Next, we use Proposition \ref{Th-4.2} to prove our main results by
approximation from  nonnegative payoff rates to the payoff rates
that may be unbounded from above and from below.

\begin{thm}\label{Th-4.3} {\rm Under Assumptions \ref{ass:3.1}, \ref{ass:3.1b} and \ref{ass:3.3}, the following assertions hold.
\begin{description}
\item[(a)] There exists a unique ${\varphi}$ in $\mathbb{B}^{1}_{V_0,V_1}([0,T]\times S)$  satisfying the following {\it Shapley equation} for the risk-sensitive criterion of CTSGs on the finite horizon:
\begin{eqnarray}
   \begin{cases}
    \displaystyle {\varphi}'(t,x)+\sup_{\phi\in P(A(x))}\inf_{\psi\in P(B(x))}\left[\theta r(x,\phi,\psi){\varphi}(t,x)+\int_{S}{\varphi}(t,y)q(dy|x,\phi,\psi)\right]=0, \\
    {\varphi}(T,x)= {e^{\theta g(x)}}, \label{EE-2}
    \end{cases}
 \end{eqnarray}
for each $x\in S$ and   $t\in E_{({\varphi},x)}$ with $m_L(E_{({\varphi},x)}^c)=0$.

\item[(b)] There exists a pair of Markov policies  $(\hat\pi_1,\hat\pi_2) \in \Pi_1^m\times \Pi_2^m$ such that, for $(t,x)\in [0,T]\times S$,
    \begin{eqnarray*}
     -{\varphi}'(t,x)&=&\theta r(x,\hat\pi^t_1,\hat\pi^t_2){\varphi}(t,x)+\int_{S}{\varphi}(t,y)q(dy|x,\hat\pi^t_1,\hat\pi^t_2) \\
                    &=& \sup_{\phi\in P(A(x))}\left[\theta r(x,\phi_,\hat\pi^t_2){\varphi}(t,x)+\int_{S}{\varphi}(t,y)q(dy|x,\phi,\hat\pi^t_2)\right] \\
                     &=& \inf_{\psi\in P(B(x))} \left[\theta r(x,\hat\pi^t_1,\psi){\varphi}(t,x)+\int_{S}{\varphi}(t,y)q(dy|x,\hat\pi^t_1,\psi)\right]
      \end{eqnarray*}
\item[(c)] $\displaystyle {\varphi}(t,x)= \sup_{\pi_{1}\in \Pi_1^m}\inf_{\pi_2\in\Pi_2^m}J(\pi_1,\pi_2,t,x)=\inf_{\pi_2\in\Pi_2^m}\sup_{\pi_{1}\in \Pi_1^m}J(\pi_1,\pi_2,t,x)=J(\hat\pi_1,\hat\pi_2,t,x)$  for  $(t,x)\in [0,T]\times S$. 
    \item[(d)] The value function $\mathbb{M}(x)$ exists and is equal to ${\varphi}(0,x)$ for each $x\in S$, and $(\hat\pi_1,\hat\pi_2)$ is a Nash equilibrium.
\end{description}}
\end{thm}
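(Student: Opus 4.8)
The plan is to reduce the general (sign-unrestricted) problem to the nonnegative-payoff case of Proposition \ref{Th-4.2}, via a truncation-from-below combined with a deterministic shift. For each $m\ge 1$ set $r_m:=r\vee(-m)$ and $g_m:=g\vee(-m)$. The key observation is that, by Assumption \ref{ass:3.1}(iii), these obey $|r_m(x,a,b)|\le M_0+\frac{\sqrt2}{2}\sqrt{\ln V_0(x)}$ and $|g_m(x)|\le M_0+\frac{\sqrt2}{2}\sqrt{\ln V_0(x)}$ with the \emph{same} constant $M_0$, so the models $\mathbb{M}_m$ with data $(r_m,g_m)$ satisfy Assumptions \ref{ass:3.1}, \ref{ass:3.1b}, \ref{ass:3.3} uniformly in $m$. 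Since $r_m,g_m$ are bounded below by $-m$ but not necessarily nonnegative, I would apply Proposition \ref{Th-4.2} not to $\mathbb{M}_m$ but to the shifted model with payoff $r_m+m\ge 0$ and terminal $g_m+m\ge 0$, obtaining a solution $\bar{\varphi}_m$. Because the shift enters the criterion only through the deterministic factor $e^{\theta m(T-t)+\theta m}$, which commutes with $\sup_\phi$, with $\inf_\psi$, and with $\int_S(\cdot)\,q(dy|\cdots)$, the function $\varphi_m(t,x):=e^{-\theta m(T-t)-\theta m}\bar{\varphi}_m(t,x)$ solves the Shapley equation for $\mathbb{M}_m$ with terminal value $e^{\theta g_m(x)}$ and is the value of that game.

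Next I would pass to the limit. By Lemma \ref{lem3.1}(c) applied to $\mathbb{M}_m$ (with the uniform $M_0$), each $\varphi_m$ obeys $e^{-\theta[Te^{\rho_0T}+M_0T+e^{\rho_0T}+M_0]V_0(x)}\le\varphi_m(t,x)\le LV_0(x)$. Since $r_m\downarrow r$ and $g_m\downarrow g$ and $\theta>0$, the integrands $e^{\theta\int_t^T r_m\,ds+\theta g_m}$ decrease pointwise, and $\sup_{\pi_1}\inf_{\pi_2}$ preserves this, so $\{\varphi_m\}$ is nonincreasing and the pointwise limit $\varphi(t,x):=\lim_{m\to\infty}\varphi_m(t,x)$ exists, lies in $\mathbb{B}_{V_0}([0,T]\times S)$, and inherits the same lower bound. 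Writing $\varphi_m(t,x)=e^{\theta g_m(x)}+\int_t^T H_m(s,x)\,ds$ with $H_m$ the minimax expression built from $(r_m,\varphi_m)$, I would show $H_m(s,x)\to H(s,x)$, the corresponding expression for $(r,\varphi)$, and then deduce $\varphi(t,x)=e^{\theta g(x)}+\int_t^T H(s,x)\,ds$; absolute continuity in $t$ gives the Shapley equation (\ref{EE-2}), and the $V_1$-bound on $\varphi'$ (exactly as in Proposition \ref{Th-4.2}, using $T|\theta r|\le e^{2T\theta(M_0+T\theta)}V_0$ and Assumption \ref{ass:3.1b}(ii)) places $\varphi$ in $\mathbb{B}^1_{V_0,V_1}([0,T]\times S)$.

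The convergence $H_m\to H$ is the main obstacle, since it requires interchanging the limit with $\sup_{\phi\in P(A(x))}\inf_{\psi\in P(B(x))}$ and with integration against the signed, unbounded kernel $q(dy|x,\phi,\psi)$. I would argue as in Proposition \ref{Th-4.2}: use compactness of $P(A(x)),P(B(x))$ (Assumption \ref{ass:3.3}(i)) to extract convergent minimax selectors, invoke Assumption \ref{ass:3.3}(ii,iii) with the generalized-convergence lemma (Lemma 8.3.7 of \cite{one99}) to pass the limit through $\int_S\varphi_m(s,y)\,q(dy|x,\phi,\psi)$, and bound $\limsup_m H_m\le H$ and $\liminf_m H_m\ge H$ separately. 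The one genuinely new feature relative to Proposition \ref{Th-4.2} is that here $\{\varphi_m\}$ decreases rather than increases, so I would use the uniform domination $\varphi_m\le\varphi_1\le LV_0$ together with $\int_S V_0(y)\,q(dy|x,\phi,\psi)\le\rho_0V_0(x)$ (dominated convergence) in place of monotone convergence for the integral terms; the selection machinery is otherwise unchanged.

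Finally, parts (b)--(d) follow the template of Proposition \ref{P-4.1}. A measurable minimax selection theorem (Fan's theorem together with Proposition 7.50 of \cite{BS96} and Lemma 4.1 of \cite{GuoZhang20}) applied to (\ref{EE-2}) yields the Markov saddle pair $(\hat\pi_1,\hat\pi_2)$ realizing the three equalities of (b). For (c) I would feed (b) into the extension of the Dynkin formula, Lemma \ref{Th-3.2}: the $\sup_\phi$ line produces, for every history-dependent $\pi_1\in\Pi_1$, a differential inequality whose integration via Lemma \ref{Th-3.2}(a) gives $J(\pi_1,\hat\pi_2,t,x)\le\varphi(t,x)$, while the $\inf_\psi$ line symmetrically gives $J(\hat\pi_1,\pi_2,t,x)\ge\varphi(t,x)$ for every $\pi_2\in\Pi_2$; with $J(\hat\pi_1,\hat\pi_2,t,x)=\varphi(t,x)$ this sandwiches $\sup_{\pi_1}\inf_{\pi_2}J=\inf_{\pi_2}\sup_{\pi_1}J=\varphi$ over both the Markov and the history-dependent classes. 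This chain of equalities simultaneously yields the value characterization in (c), the uniqueness in (a) (any solution in $\mathbb{B}^1_{V_0,V_1}$ must equal the common game value), and the conclusion of (d) that $\mathbb{M}(x)=\varphi(0,x)$ with $(\hat\pi_1,\hat\pi_2)$ a Nash equilibrium.
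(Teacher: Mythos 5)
Your proposal follows essentially the same route as the paper's own proof: truncation from below ($r_n=\max\{-n,r\}$, $g_n=\max\{-n,g\}$), a deterministic shift by $+n$ so that Proposition \ref{Th-4.2} applies to the nonnegative data, removal of the shift via the factor $e^{\theta n(T-t)+\theta n}$, a monotone (decreasing) passage to the limit under the uniform $LV_0$ bound from Lemma \ref{lem3.1}(c), and parts (b)--(d) via the Proposition \ref{P-4.1} template with the Dynkin formula of Lemma \ref{Th-3.2}. Your explicit observation that the decreasing sequence requires domination rather than monotone convergence when passing the limit through the minimax Hamiltonian is a detail the paper leaves implicit (it simply invokes the Proposition \ref{Th-4.2} argument), but it does not alter the structure of the proof.
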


\begin{proof} We only prove (a) and the Nash Equilibrium policies $(\hat\pi_1,\hat\pi_2)$ since the others can be proved as  Proposition \ref{P-4.1}. For each $n\geq 1$, define $r_n$ on $K$ as follows:  for each $(x,a,b)\in K,$
 \begin{equation*}
r_{n}(x,a,b):=\max\{-n, r(x,a,b)\},\quad{ g_{n}(x):=\max\{-n, g(x)\}},
\end{equation*}
which implies that $\displaystyle\lim_{n\to\infty} r_n(x,a,b)=r(x,a,b)$, $\displaystyle\lim_{n\to\infty} g_n(x)=g(x)$ and $k_n(x,a,b):=r_n(x,a,b)+n\geq 0$ for each $(x,a,b)\in K$ and $f_{n}(x):=n+g_{n}(x)\ge 0 $ for
$(t,x)\in [0,T]\times S, n\geq 1$. Moreover, it follows from Assumption \ref{ass:3.1}(iii) that
\begin{equation*}
 -M_0-\frac{\sqrt{2}}{2}\sqrt{\ln V_0(x)}\leq \max\{-n, -M_0-\frac{\sqrt{2}}{2}\sqrt{\ln V_0(x)}\}\leq r_n(x,a,b)\leq M_0+\frac{\sqrt{2}}{2}\sqrt{\ln V_0(x)}.
\end{equation*}
Thus, $|r_n(x,a,b)|\leq M_0+\frac{\sqrt{2}}{2}\sqrt{\ln V_0(x)}$ for all $(x,a,b)\in K$ for all $n\geq 1$. {By
the same reasoning, we have $|g_n(x)|\leq M_0+\frac{\sqrt{2}}{2}\sqrt{\ln V_0(x)}$ for all $x\in S$ for all $n\geq 1$.} And so Assumptions \ref{ass:3.1}, \ref{ass:3.1b} and \ref{ass:3.3}  still hold for each model ${\cal{N}}_n$ defined by
\begin{equation}\label{MNn}
{\cal{N}}_n:=\left\{S,(A,A(x), x\in S), (B,B(x), x\in S), k_n(x,a,b), q(dy|x,a,b), {f_{n}(x)} \right\}.
\end{equation}

{
  For any real-valued Borel measurable function $u$ on $K$ and $g$ on $[0,T]\times S$ , let \begin{equation}J_{u,g}(t,x):=\sup_{\pi_{1}\in\Pi_{1}^{m}}\inf_{\pi_{2}\in\Pi_{2}^m}\mathbb{E}_\gamma^{\pi_{1},\pi_{2}}
  \left[e^{\theta\int_t^T\int_{A\times B}u(x_s,\pi_{1}(da|x_s,s)),\pi_{2}(db|x_s,s))ds+\theta g(x_{T})}|x_t=x\right],\label{PI}\end{equation} provided the integral exists.  Then, for each $n\geq 1$, since $k_n(x,a,b)\geq 0$ and $f_n(x)\geq 0$, by Proposition \ref{Th-4.2}(b) we have
  $J_{k_n,f_n}$ is in $\mathbb{B}_{V_0,V_1}^{1}([0,T]\times S)$ and satisfies
 \begin{eqnarray}
   \begin{cases}
  \displaystyle   J_{k_n,f_n}'(t,x)+\sup_{\phi\in P(A(x))}\inf_{\psi\in P(B(x))}\left[\theta k_n(x,\phi,\psi)J_{k_n,f_n}(t,x)+\int_{S}J_{k_n,f_n}(t,y)q(dy|x,\phi,\psi)\right]=0,\\
     J_{k_n,f_n}(T,x)=e^{\theta f_{n}(x)},
    \end{cases}  \label{g1}
   \end{eqnarray}
   for all $x\in S$ and $t\in E_{(J_{k_n,f_n},x)}$.

   Moreover,  since $J_{k_n,f_n}(t,x)=J_{r_n+n,g_{n}+n}(t,x)=J_{r_n,g_{n}}(t,x)e^{\theta(T-t)n+\theta n}$, by (\ref{g1}) we derive that
   \begin{eqnarray*}
   \begin{cases}
 \displaystyle    J_{r_n,g_{n}}'(t,x)+\sup_{\phi\in P(A(x))}\inf_{\psi\in P(B(x))}[\theta r_n(x,\phi,\psi)J_{r_n,g_{n}}(t,x)+\int_{S}J_{r_n,g_{n}}(t,y)q(dy|x,\phi,\psi)]=0,\\
     J_{r_n,g_{n}}(T,x)=e^{\theta g_{n}(x)}.
    \end{cases}
   \end{eqnarray*}
   This is
     \begin{eqnarray}
        J_{r_n,g_{n}}(t,x)&=&e^{\theta g_{n}(x)}+\int_t^T \sup_{\phi\in P(A(x))}\inf_{\psi\in P(B(x))}\left[\theta r_n(x,\phi,\psi)J_{r_n,g_{n}}(s,x)\right.\nonumber\\
        &&\left.+\int_{S}J_{r_n,g_{n}}(s,y)q(dy|x,\phi,\psi)\right]ds.\label{g1-a}
        \end{eqnarray}
 On the other hand, for each  $(t,x)\in [0,T]\times S$. it follows from  (\ref{PI}) and Lemma \ref{lem3.1}(c) that
 \begin{eqnarray}
 |J_{r_n,g_{n}}(t,x)|\leq LV_0(x), \ \ \ \  n\geq 1. \label{MN}
 \end{eqnarray}
     Since $r_n(x,a,b)$ and $g_{n}(x)$ are decreasing in $n\geq 1$, and so is the corresponding value functions $J_{r_n,g_{n}}(t,x)$. Therefore,   the limit  ${\varphi}(t,x):=\lim_{n\to\infty}J_{r_n,g_{n}}(t,x)$ exists for each $(t,x)\in [0,T]\times S$. Then, as the arguments for Proposition \ref{Th-4.2}  with ${\varphi}_n(t,x)$  replaced with  $J_{r_n,g_{n}}(t,x)$ here, from   (\ref{g1-a}) and (\ref{MN}) we can see that (a) is also true.

     Moreover, by (\ref{EE-2}) and part (c), 
     we see that $(\hat\pi_1,\hat\pi_2)$  is  a Nash equilibrium.
     }
  \end{proof}

\section{Algorithm}\label{sectional}

Until now, we have established the existence of the value function and Nash
equilibria of the risk-sensitive stochastic game. In this section,
under the suitable conditions, we prove that the value function and Nash
equilibria of the game can be approximated by iteratively solving a
series of two-person zero-sum matrix games through a value
iteration-type algorithm.


First, we have the following convergence result.

\begin{thm}\label{lem4.1}
{\rm  Under Assumption \ref{ass:3.3}, suppose that  $\|q\|=\sup_{x\in S} q^*(x)$ and $\|r\|$ { and $\|g\|$} are finite.
Let $v_{0}(t,x)$ be an arbitrary function in $\mathbb{B}^{1}_{1,1}([0,T]\times S)$, for $n\ge 0$,
\begin{eqnarray}
&&\hspace{-2cm} v_{n+1}(t,x):=\Gamma v_{n}(t,x)\nonumber\\
&&\hspace{-2cm}={e^{\theta g(x)}}+\int_t^T \sup_{\phi\in
P(A(x))}\inf_{\psi\in P(B(x))}\left[\theta
r(x,\phi,\psi)v_{n}(s,x)+\int_{S}v_{n}(s,y)q(dy|x,\phi,\psi)\right]ds,\label{nn+1}
\end{eqnarray}
where the operator $\Gamma$ is defined in Appendix (\ref{T21}).
 Then,  we have
\begin{description}
\item[(a)] $\lim_{n\to \infty}v_n(t,x)=\varphi(t,x)$, for any $(t,x)\in [0,T]\times S$,   and given $\varepsilon>0$, there exists $N_{1}>0$ such that, for all $n\geq N_{1}$,
\begin{eqnarray}\label{14}
\|v_{n+1}-v_{n}\|&=&\sup_{(t,x)\in [0,T]\times S}|v_{n+1}(t,x)-v_{n}(t,x)|<\frac{\varepsilon}{2e^{\left(\theta\|r\|+2\|q\|\right)T}\left(1+\frac{2\|q\|}{\theta \|r\|}\right)},\\
\|\varphi-v_{n+1}\|&=&\sup_{(t,x)\in [0,T]\times S}|\varphi(t,x)-v_{n+1}(t,x)|<\frac{\varepsilon}{2}, \label{n+1}
\end{eqnarray}
where $\varphi(t,x)$ is the value function of the stochastic game, same as in Proposition \ref{P-4.1}.
\item[(b)]There exist  Markov policies $\phi_{n} \in \Pi_1^m, \psi_{n} \in
\Pi_2^m$ such
that
\begin{eqnarray}\nonumber
&&(\phi_{n}(da|x,t),\psi_{n}(db|x,t))\in\\
&&\arg \sup_{\phi\in P(A(x))}\inf_{\psi\in P(B(x))}\left[\theta
r(x,\phi,\psi)v_n(t,x)+\int_{S}v_n(t,y)q(dy|x,\phi,\psi)\right],\label{15}
\end{eqnarray}
and  for each $n\geq N_{1}$,
\begin{eqnarray*}
\sup_{(t,x)\in [0,T]\times S}|J(\phi_{n},\psi_{n},t,x)-\varphi(t,x)|<\varepsilon,
\end{eqnarray*}
where $J(\phi_{n},\psi_{n},t,x)$ is defined in (\ref{eqtx}) and
$(\phi_{n},\psi_{n})$ is a strategy pair for $\varepsilon$-Nash
equilibrium.
\end{description}
}
\end{thm}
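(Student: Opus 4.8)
The plan is to get (a) directly from the generalized contraction already constructed for $\Gamma$ in Proposition~\ref{P-4.1}, and to get (b) as a perturbed version of the verification argument in Proposition~\ref{P-4.1}(c), with the one-step gap $\|v_{n+1}-v_{n}\|$ playing the role of the error. For (a), I would recall from the proof of Proposition~\ref{P-4.1}(a) that under the present boundedness $\Gamma$ satisfies $|\Gamma^{n}v_{1}(t,x)-\Gamma^{n}v_{2}(t,x)|\le \tilde L^{n}\frac{(T-t)^{n}}{n!}\|v_{1}-v_{2}\|$ with $\tilde L=\theta\|r\|+2\|q\|$, and that $\varphi$ is its unique fixed point. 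Since $v_{n}=\Gamma^{n}v_{0}$ and $\varphi=\Gamma^{n}\varphi$, taking $v_{1}=v_{0},\,v_{2}=\varphi$ gives $\|v_{n}-\varphi\|\le \frac{(\tilde L T)^{n}}{n!}\|v_{0}-\varphi\|$, and because $\frac{(\tilde L T)^{n}}{n!}$ is the general term of the convergent series $e^{\tilde L T}$ it tends to $0$; hence $v_{n}\to\varphi$ uniformly on $[0,T]\times S$. Uniform convergence yields \eqref{n+1} for all $n$ past some threshold, and since $\|v_{n+1}-v_{n}\|\le\|v_{n+1}-\varphi\|+\|\varphi-v_{n}\|\to 0$, I would enlarge the threshold to some $N_{1}$ so that \eqref{14} holds simultaneously.

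For (b) I would first extract the selectors. For each fixed $(t,x)$ the map $(\phi,\psi)\mapsto \theta r(x,\phi,\psi)v_{n}(t,x)+\int_{S}v_{n}(t,y)q(dy|x,\phi,\psi)$ is bilinear and continuous on the compact convex sets $P(A(x))\times P(B(x))$ by Assumption~\ref{ass:3.3}, so Fan's minimax theorem gives a saddle point and the measurable minimax selection theorem (exactly as invoked for Proposition~\ref{P-4.1}(b)) produces Markov kernels $(\phi_{n},\psi_{n})$ realizing the arg-sup-inf in \eqref{15}. Writing $M_{n}(\phi,\psi;t,x)$ for the bracketed expression and noting $-v_{n+1}'(t,x)=M_{n}(\phi_{n}^{t},\psi_{n}^{t};t,x)$, the saddle property yields
\begin{equation*}
M_{n}(\phi,\psi_{n}^{t};t,x)\le -v_{n+1}'(t,x)\le M_{n}(\phi_{n}^{t},\psi;t,x),\qquad \forall\,\phi\in P(A(x)),\ \psi\in P(B(x)).
\end{equation*}

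The crux is to turn these into payoff bounds. Fix any $\pi_{1}\in\Pi_{1}$ and apply the extension of Dynkin's formula, Lemma~\ref{Th-3.2}(a), to the $\omega$-dependent test function $\varphi(\omega,t,y)=e^{\int_{0}^{t}\theta r(x_{v},\pi_{1}^{v},\psi_{n}^{v})dv}\,v_{n+1}(t,y)$ along the dynamics governed by $(\pi_{1},\psi_{n})$ — precisely the device used in Proposition~\ref{P-4.1}(c). Using the left inequality with $\phi=\pi_{1}^{t}$ and replacing $v_{n}$ by $v_{n+1}$ inside the bracket at the cost of an error of modulus at most $\tilde L\|v_{n+1}-v_{n}\|$, the integrand is controlled, and after a Gronwall-type bounding of the exponential weight and the accumulated transition mass one obtains $J(\pi_{1},\psi_{n},0,x)\le v_{n+1}(0,x)+C\|v_{n+1}-v_{n}\|$ with $C$ equal to the constant in the denominator of \eqref{14} (this is exactly how \eqref{14} is calibrated); the right inequality gives the symmetric lower bound $J(\phi_{n},\pi_{2},0,x)\ge v_{n+1}(0,x)-C\|v_{n+1}-v_{n}\|$ for every $\pi_{2}\in\Pi_{2}$, and running the same argument from an arbitrary initial time $t$ gives the corresponding bounds for $J(\cdot,\cdot,t,x)$. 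Combining with \eqref{14} and \eqref{n+1}: taking $\pi_{2}=\psi_{n}$ and $\pi_{1}=\phi_{n}$ sandwiches $J(\phi_{n},\psi_{n},t,x)$ within $\tfrac{\varepsilon}{2}$ of $v_{n+1}(t,x)$, so $\sup_{(t,x)}|J(\phi_{n},\psi_{n},t,x)-\varphi(t,x)|<\varepsilon$; while the two one-sided bounds give $\sup_{\pi_{1}}J(\pi_{1},\psi_{n},0,x)\le \mathbb{M}(x)+\varepsilon$ and $\inf_{\pi_{2}}J(\phi_{n},\pi_{2},0,x)\ge \mathbb{M}(x)-\varepsilon$, i.e. the $\varepsilon$-Nash property.

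The hard part, and the reason the argument is more delicate than for discounted MDPs, is that $\Gamma$ is not a one-step contraction: the selectors $(\phi_{n},\psi_{n})$ are optimal for $v_{n}$ rather than for the fixed point $\varphi$, so they do not define an exact equilibrium, and the verification step unavoidably carries the mismatch between the function $v_{n}$ sitting inside the bracket and the function $v_{n+1}$ whose derivative sits outside it. Propagating this one-step gap $\|v_{n+1}-v_{n}\|$ through the history-dependent Dynkin formula while keeping the multiplicative constant explicit — so that it can be matched against the calibrated threshold in \eqref{14} — is the technical heart of the proof; everything else is a perturbation of the already-established Proposition~\ref{P-4.1}.
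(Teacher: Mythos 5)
Your proposal is correct and follows essentially the same route as the paper: part (a) rests on the iterated $n$-step contraction estimate from Proposition \ref{P-4.1} (you compare $\Gamma^n v_0$ directly with the fixed point $\varphi$, while the paper runs a Cauchy-sequence argument from the same bound — a cosmetic difference), and part (b) is exactly the paper's perturbed verification argument: extract the saddle-point selectors for $v_n$, push the one-step gap $\|v_{n+1}-v_n\|$ through the (extended) Dynkin formula with the explicit constant $e^{T\theta\|r\|}\bigl(1+\tfrac{2\|q\|}{\theta\|r\|}\bigr)$ calibrated against \eqref{14}, and sandwich $J(\phi_n,\psi_n,t,x)$ between the resulting one-sided bounds.
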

Since the operator $\Gamma$ is not a one-step contraction operator which
is widely used in discounted MDPs, the proof of Theorem~\ref{lem4.1}
is not trivial and is a little complicated. We present it in
Appendix C.

From Theorem~\ref{lem4.1}, we can see the calculation of function
$v_{n+1}(t,x)$ is critical for solving our risk-sensitive stochastic
game. Below, we describe the algorithm in a recurrent form. For the
current iteration $n$, we assume that we have the function
$v_n(t,x)$, where $(t,x) \in [0,T]\times S$. The purpose is to give
a computation procedure to iteratively obtain $v_{n+1}(t,x)$
converging to the value function of the game.


For any given
$(t,x) \in [0,T]\times S$, define
\begin{eqnarray}\label{eq_Cweight}
\hspace*{-0.5cm}&&c(t,x,v_n,a,b) := \theta
r(x,a,b)v_n(t,x)+\int_{S}v_n(t,y)q(dy|x,a,b), \ \forall
a\in A(x), b\in B(x).
\end{eqnarray}
Then,  the stochastic
game with the payoff function $c(t,x,v_n,a,b)$ can be treated as a \emph{matrix game} at the current
situation. We solve the corresponding two-person zero-sum matrix game through two
linear programs as below \cite{BarronBook}.
\begin{eqnarray}\label{eq_LPv}
&&\max \ v  \nonumber\\
&&\mbox{subject to: }\left\{
\begin{array}{l}
v \leq \int_{a\in A(x)} c(t,x,v_n,a,b) \phi(da), \quad \forall b \in B(x);\\
\int_{a\in A(x)} \phi(da) = 1; \\
\phi(da) \geq 0, \quad \forall a \in A(x).
\end{array}
\right.,
\end{eqnarray}
where $\phi$ is an optimization variable which
indicates the mixed policy (probability distribution) of player~1's action
selection in the action space $A(x)$. Similarly, we solve the
player~2's action selection probability distribution $ \psi$
through the following linear program.
\begin{eqnarray}\label{eq_LPz}
&&\min \ z  \nonumber\\
&&\mbox{subject to: }\left\{
\begin{array}{l}
z \geq \int_{b\in B(x)} c(t,x,v_n,a,b) {\psi}(db), \quad \forall a \in A(x);\\
\int_{b\in B(x)} {\psi}(db) = 1; \\
\tilde{\psi}(db) \geq 0, \quad \forall b \in B(x).
\end{array}
\right.
\end{eqnarray}
We can solve the above linear programs by simplex algorithms and
obtain the solutions $(\phi_{n}(da|x,t), a_{n}(t,x))$ and
$(\psi_{n}(db|x,t),b_{n}(t,x))$, respectively. With the classical
results of matrix games, we see that $a_{n}(t,x) =b_{n}(t,x)$, which
is the value of the matrix game. By (\ref{eq_Cweight}),  for any
$\phi\in P(A(x)), \psi\in P(B(x))$, we denote
 \begin{eqnarray*}
&&c(t,x,v_n,\phi,\psi) := \int_{a\in A(x)}\int_{b\in B(x)} \left[\theta
r(x,a,b)v_n(t,x)+\int_{S}v_n(t,y)q(dy|x,a,b)\right]\phi(da)\psi(db).
\end{eqnarray*}
Then, we have
\begin{eqnarray}
v_{n+1}(t,x)&=&e^{\theta g(x)}+
\int_t^T\sup_{\phi\in P(A(x))}\inf_{\psi\in P(B(x))}\left[\theta
r(x,\phi,\psi)v_n(s,x)+\int_{S}v_n(s,y)q(dy|x,\phi,\psi)\right]ds\nonumber\\
&=&e^{\theta g(x)}+
\int_t^T\sup_{\phi\in P(A(x))}\inf_{\psi\in P(B(x))}c(s,x,v_n,\phi,\psi) ds\nonumber\\
&=&e^{\theta g(x)}+
\int_{t}^{T}a_{n}(s,x)ds.\label{vn+1int}
\end{eqnarray}

Therefore, the value of $v_{n+1}(t,x)$ can be obtained by solving
(\ref{eq_LPv}) or (\ref{eq_LPz}). 
 We update the value of $v_{n+1}(t,x)$ for every $(t,x)$ by solving
the above linear programs (we only need to solve either
(\ref{eq_LPv}) or (\ref{eq_LPz}) since their optimal values are
equal). We repeat the same computation procedure based on the
updated values $v_{n+1}(t,x)$'s.

By Theorem \ref{lem4.1}, we have the following value iteration-type
Algorithm 1. For the convenience of computation, we assume that $A, B$
are finite.

%
%
%

\begin{algorithm}[!htb]\label{algo1}
    \caption{A value iteration-type algorithm to solve risk-sensitive stochastic games}
    {\bfseries Algorithm parameter:} the payoff rate $r(x,a,b)$, $\|r\|=\sup_{x\in S,a\in A(x),b\in B(x)}|r(x,a,b)|$; the transition rate $q(dy|x,a,b)$, $\|q\|=\sup_{x\in S} q^*(x)
    =\sup_{x\in S,a\in A(x),b\in B(x)}|-q(\{x\}|x,a,b)|$; the terminal reward $g(x)$; the risk-sensitive parameter $\theta>0$;  finite horizon $T>0$; Player 1 has $m=|A|$ actions and Player 2
    has $n=|B|$ actions; a small error bound $\varepsilon>0$ determining the algorithm accuracy
    \\
    {\bfseries Initialize:} $v_0(t,x)\in \mathbb{B}^{1}_{1,1}([0,T]\times S)$  arbitrarily, $n=0$\\
    \Repeat{ $\Delta<\frac{\varepsilon}{2e^{(\theta\|r\|+2\|q\|)T}\left(1+\frac{2\|q\|}{\theta \|r\|}\right)} $ }
    {
        $\Delta \gets 0$\\
        {\bfseries Loop} \For{ each $(t,x)\in [0,T]\times S$}
         {
         \For{ each $s\in [t,T]$}
        { 
            \For{ $i=1;i<m;i++$}
            {
                \For{$j=1;j<n;j++$}
                {
                    $c(s,x,v_{n})_{ij} \gets \theta
r(x,a_{i},b_{j})v_n(s,x)+\int_{S}v_n(s,y)q(dy|x,a_{i},b_{j})$
                }
            }
            Solving the game with matrix $C(s,x,v_{n})$\\
            $a_{n}(s,x) \gets  \sup_{\phi\in P(A(x))}\inf_{\psi\in P(B(x))}\phi^{T} C(s,x,v_{n})\psi $   \\
            $(\phi_{n}(\cdot|x,s),\psi_{n}(\cdot|x,s)) \gets \arg\sup_{\phi\in P(A(x))}\inf_{\psi\in P(B(x))}\phi^{T} C(s,x,v_{n})\psi$\\
       }
            $v_{n+1}(t,x)\gets e^{\theta g(x)}+\int_{t}^{T}a_{n}(s,x)ds$\\

            $\Delta \gets \max\{\Delta,|v_{n+1}(t,x)-v_{n}(t,x)|\}$\\

        }
        $n \gets n+1$\\
    }
    {\bfseries Output:}\\
   ~~~~~~~ $v_{n+1}(0,x)$ and $(\phi_{n}(\cdot|x,t),\psi_{n}(\cdot|x,t)),$ $(t,x)\in [0,T]\times S$\\
\end{algorithm}

With Theorem~\ref{lem4.1}, we can see
that the limit of $v_{n+1}(0,x)$ (as $n\to\infty$) is the value of
the game ${\varphi}(0,x)$. When the stopping condition (\ref{14}) is
satisfied and the algorithm stops, the output policy pair
$(\phi_{n},\psi_{n})$ is an $\varepsilon$-Nash equilibrium and the
output value function $v_{n+1}(0,x)$ is within $\varepsilon/2$ error bound from
the optimal value function ${\varphi}(0,x)$. The convergence of the
algorithm is also guaranteed, since it is proved that the algorithm
will stop within a finite number of iterations by
Theorem~\ref{lem4.1}(a).

If the payoff rates, the transition rates, or the terminal rewards is not bounded, we can solve  a series of the stochastic model
${\mathbb{M}}_n^+$  (\ref{control4}) or ${\cal{N}}_n$  (\ref{MNn}) by finite approximation technique.


\section{Examples}\label{section5}
In this section, we give two examples to illustrate our main
results.



 \begin{exm}\label{Example2}

In a system, the state of this system is  $x_{t}$ at time $t$ which is continuous in time.  The
corresponding state space is $S:=[0,+\infty)$. The system evolves as follows. If the system is at state $x_{t}$ at time $t$, two players play the game of scissors, paper, stone. $A=A(x)=B=B(x)=\{1,2,3\}$, $x\in S$, scissors are denoted as 1, paper is denoted as 2, and stone is denoted as 3.
Denote $a_{t}$ the action player 1 taken and $b_{t}$ the action player 2 taken at time $t$. 
The winner receives payoffs at rate $\alpha \sqrt{\ln (1+x_{t})}$
from the loser, $0<\alpha\le 0.5$. If they are tied, both of them
receive 0.
That is, $$r(x,a,b)=\left\{\begin{array}{ll}0,& x\in S, a=b;\\ \alpha \sqrt{\ln(1+x)},& x\ge 0,(a=1,b=2),(a=2,b=3),(a=3,b=1);\\
-\alpha \sqrt{\ln (1+x)},& x\ge 0,(a=2,b=1),(a=3,b=2),(a=1,b=3).
\end{array}\right.$$ Next, state $x_{t}$ is assumed to keep invariable for an exponential-distributed random time with parameter $\lambda(x_{t},a_{t},b_{t})$ ($0<\lambda(x,a,b)\le L$), and then jump to other states with exponential-distribution $\rm{exp}(\frac{1}{x_{t}})$.
Therefore, the transition rate of state is represented by, for each
$D\in \mathcal{B}(S)$,
\begin{equation*}
q(D|x,a,b)=\lambda(x,a,b)\left[\int_{y\in D}\frac{1}{x}e^{
-\frac{y}{x}}dy-\delta_{x}(D)\right], \quad x\in S, a\in A, b\in
B.\end{equation*} The terminal reward function is
$g(x)=\frac{\sqrt{\ln (1+x)}}{2}$. For this zero-sum stochastic game
model, player~1 wishes to maximize the risk-sensitive rewards on a
given $T$ horizon over all policies and player~2 wishes to minimize
the risk-sensitive cost on a given $T$ horizon over all policies.
\end{exm}

Under the above conditions, we have the following fact.
\begin{prop}\label{prop6.2}{\rm
 Example \ref{Example2} satisfies  Assumptions \ref{ass:3.1}, \ref{ass:3.1b} and \ref{ass:3.3}, and hence (by Theorem \ref{Th-4.3}) there exists a Nash equilibrium.}
\end{prop}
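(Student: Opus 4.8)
The plan is to exhibit explicit Lyapunov functions and then verify each hypothesis of Assumptions \ref{ass:3.1}, \ref{ass:3.1b} and \ref{ass:3.3} by direct computation, after which Theorem \ref{Th-4.3} yields the Nash equilibrium. I would take $V_0(x):=1+x$ and $V_1(x):=(1+x)^2$ on $S=[0,\infty)$; both are Borel measurable and $\geq 1$. The crucial observation is that the post-jump law $\frac{1}{x}e^{-y/x}\,dy$ is the exponential distribution with mean $x$, whose moments are $\int_0^\infty y^k\frac{1}{x}e^{-y/x}\,dy=k!\,x^k$. Since $q(\{x\}|x,a,b)=-\lambda(x,a,b)$, we have $q(x,a,b)=\lambda(x,a,b)$ and $q^*(x)=\sup_{a,b}\lambda(x,a,b)\le L$, which will handle the stability-type bounds.

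First I would verify Assumption \ref{ass:3.1}. For (i), using that the jump law has mean $x$,
\[
\int_S V_0(y)\,q(dy|x,a,b)=\lambda(x,a,b)\big[(1+x)-(1+x)\big]=0\le \rho_0 V_0(x)
\]
for any $\rho_0>0$; the $V_0$-drift vanishes identically, which is the cleanest point. For (ii), $q^*(x)\le L\le L\,V_0(x)$ because $V_0\ge 1$, so $L_0:=L$ works. For (iii), we have $|r(x,a,b)|\le \alpha\sqrt{\ln(1+x)}$ and $|g(x)|=\tfrac12\sqrt{\ln(1+x)}$, and since $\alpha\le \tfrac12<\tfrac{\sqrt2}{2}$ and $\tfrac12<\tfrac{\sqrt2}{2}$, both are dominated by $M_0+\tfrac{\sqrt2}{2}\sqrt{\ln V_0(x)}$ (note $\ln V_0(x)=\ln(1+x)$) for any $M_0>0$.

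Next I would check Assumption \ref{ass:3.1b} with $V_1=(1+x)^2$, so $V_1^2=(1+x)^4$. Expanding $(1+y)^4$ and inserting the moments $k!\,x^k$ gives $\int_0^\infty(1+y)^4\frac{1}{x}e^{-y/x}\,dy=1+4x+12x^2+24x^3+24x^4$, whence
\[
\int_S V_1^2(y)\,q(dy|x,a,b)=\lambda(x,a,b)\big(6x^2+20x^3+23x^4\big)\le 49L\,(1+x)^4,
\]
using $x^k\le(1+x)^4$ for $0\le k\le 4$ and $x\ge 0$; thus (i) holds with $\rho_1:=49L$ and any $b_1>0$. Condition (ii) is immediate since $V_0^2=(1+x)^2=V_1$, so $M_1:=1$ suffices. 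Assumption \ref{ass:3.3} is then essentially free: $A(x)=B(x)=\{1,2,3\}$ are finite and hence compact, and every function on a finite action set is continuous, so the continuity in $(a,b)$ of $q(D|x,a,b)$, of $r(x,a,b)$, and of the identically-zero map $(a,b)\mapsto\int_S V_0(y)q(dy|x,a,b)$ all hold automatically.

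All the steps are short; the only one requiring genuine computation is Assumption \ref{ass:3.1b}(i), where one must evaluate the fourth moment of the exponential law and dominate the resulting quartic by $(1+x)^4$ uniformly in $x$. I do not expect a real obstacle, because the choice $V_0=1+x$ is tailored precisely so that the mean-$x$ jump kernel forces the $V_0$-drift to be zero, and the higher-moment bound is then routine. With the three assumptions verified, Theorem \ref{Th-4.3} applies directly and gives the existence of a Nash equilibrium, completing the proof.
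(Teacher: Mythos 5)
Your proposal is correct and follows essentially the same route as the paper's own proof: the same Lyapunov functions $V_0(x)=1+x$ and $V_1(x)=(1+x)^2$, the same exponential-moment identity $\int_0^\infty y^k\frac{1}{x}e^{-y/x}dy=k!\,x^k$ giving zero $V_0$-drift and the quartic bound $6x^2+20x^3+23x^4$, and the same observation that finite action sets make Assumption \ref{ass:3.3} automatic. The only differences are immaterial constant choices (e.g.\ your $\rho_1=49L$ versus the paper's $23L$).
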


\begin{proof}
Let us first show that Assumption \ref{ass:3.1} holds.

There exist a real-valued Borel measurable function $V_0(x):=1+x\geq 1$ on $S$ and  positive constants
$\rho_0=1,  L_0=L, M_0=1$, such that
\begin{description}
\item[(i)]for any $x\in S, a\in A, b\in B$, \begin{eqnarray*}
 &&\int_{S}V_0(y)q(dy|x,a,b)\\&=&\lambda(x,a,b)\left[\int_{0}^{+\infty}
(1+y)\frac{1}{x}e^{-\frac{y}{x}}dy-(1+x)\right]\\
&=&\lambda(x,a,b)\left[ 1+x-(1+x)\right]=0\le 1+x=\rho_{0}V_{0}(x);
 \end{eqnarray*}

\item[(ii)] $\displaystyle q^*(x)=\sup_{a\in A(x),b\in B(x)}q(x,a,b)=\sup_{a\in A(x),b\in B(x)}-q(\{x\}|x,a,b)=
 \sup_{a\in A(x),b\in B(x)}\lambda(x,a,b)\le L\leq L(1+x)$ for all $x\in S$, where $q^*(x)$ is as in (\ref{Q});

\item[(iii)]  $|r(x,a,b)|\le \alpha \sqrt{\ln (1+x)} \le 0.5\sqrt{\ln (1+x)}\leq 1+\frac{\sqrt{2}}{2}\sqrt{\ln V_0(x)}$ and $|g(x)|=\frac{\sqrt{\ln (1+x)}}{2}\leq 1+ \frac{\sqrt{2}}{2}\sqrt{\ln V_0(x)} $
for any $x\in S, a\in A, b\in B$.

\end{description}

Now we show that Assumption \ref{ass:3.1b} holds.
A directive calculation gives
$\int_{0}^{\infty}y^{k}\frac{1}{x}e^{-\frac{y}{x}}dy
=k!x^{k}$ for all $k=0,1,\ldots$.

 There exist a real-valued Borel measurable function  $V_1(x):=(1+x)^2  \geq 1$ on $S$, and positive  constants
$\rho_1=23L, b_1=1$,  and  $M_1=1$,  such that
\begin{description}
\item[(i)] for any $x\in S, a\in A, b\in B$,
 \begin{eqnarray*}
 &&\int_{S}V_1^2(y)q(dy|x,a,b)\\&=&\lambda(x,a,b)\left[\int_{0}^{+\infty}
(1+y)^4\frac{1}{x}e^{-\frac{y}{x}}dy-(1+x)^4\right]\\
&=&\lambda(x,a,b)\left[1+4x+12x^2+24x^3+24x^4-1-4x-6x^2-4x^3-x^4\right]\\
&=&\lambda(x,a,b)\left[6x^2+20x^3+23x^4\right]\le 23L(1+x)^4=\rho_{1}V_{1}^2(x)+b_{1};
 \end{eqnarray*}

\item[(ii)] $V_0^2(x)=(1+x)^2=M_{1}V_1(x)=(1+x)^2$ for all $x\in S$.
\end{description}
Since $A, B$ are finite, Assumption  \ref{ass:3.3}  holds.

Thus, Assumptions \ref{ass:3.1}, \ref{ass:3.1b} and \ref{ass:3.3}
hold for Example \ref{Example2}, and then Theorem \ref{Th-4.3}
guarantees the existence of a Nash equilibrium.

\end{proof}

\begin{exm}\label{Example3}
\begin{equation*}{\mathbb{M}}_2:=\big\{S,(A,A(x), x\in S), (B,B(x), x\in S),r(x,a,b), q(dy|x,a,b), g(x) \big\},\end{equation*}
where $S=(-\infty,\infty)$,
for each $D\in \mathcal{B}(S)$,
\begin{equation}\label{q}
q(D|x,a,b)=\lambda(x,a,b)\left[\int_{y\in
D}\frac{1}{\sqrt{2\pi}\sigma}e^{
-\frac{(y-x)^2}{2\sigma^2}}dy-\delta_{x}(D)\right], \ \ x\in S, a\in
A(x), b\in B(x).\end{equation}
\end{exm}

To ensure the existence of a Nash equilibrium for the
model, we consider the following hypotheses:
\begin{itemize}
\item[$(A_1)$] $0<\lambda(x,a,b)\leq M(x^2+1)$ , $|r(x,a,b)|\leq M_0+\frac{\sqrt{2}}{2}\sqrt{\ln (1+x^2)}$ for all $x\in S, a\in A(x), b\in B(x)$, $|g(x)|\leq M_0+\frac{\sqrt{2}}{2}\sqrt{\ln (1+x^2)}$ for all $x\in S$ with some positive constants $M$ and $M_{0}$;
\item[$(A_2)$]  $A(x),B(x)$ are assumed to be a compact set of  Borel spaces $A,B$ for each $x\in S$, respectively;
\item[$(A_3)$] $\lambda(x,a,b)$ and $r(x,a,b)$  are Borel measurable on $K$ and  continuous in $a\in A(x), \ b\in B(x)$ for each fixed $x\in S$.
\end{itemize}

Under the above conditions, we have the following fact.
\begin{prop}\label{prop6.3}{\rm
 Example \ref{Example3} satisfies  Assumptions \ref{ass:3.1}, \ref{ass:3.1b} and \ref{ass:3.3}, and hence (by Theorem \ref{Th-4.3}) there exists a Nash equilibrium.}
\end{prop}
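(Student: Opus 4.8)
The plan is to verify the three structural hypotheses (Assumptions~\ref{ass:3.1}, \ref{ass:3.1b} and \ref{ass:3.3}) directly, exactly as was done for Example~\ref{Example2} in Proposition~\ref{prop6.2}, and then to invoke Theorem~\ref{Th-4.3}. Throughout I will abbreviate the Gaussian kernel in (\ref{q}) by $\mathcal{N}_{x}(dy):=\frac{1}{\sqrt{2\pi}\sigma}e^{-(y-x)^2/(2\sigma^2)}dy$ and use its raw even moments $\int_S y^{2k}\,\mathcal{N}_x(dy)=\sum_{j=0}^{k}\binom{2k}{2j}x^{2k-2j}\sigma^{2j}(2j-1)!!$. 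A first remark is that $q(\{x\}|x,a,b)=\lambda(x,a,b)\,[0-1]=-\lambda(x,a,b)$, so that $q^*(x)=\sup_{a,b}\lambda(x,a,b)$, and that $q(D|x,a,b)$ depends on $(a,b)$ only through the scalar factor $\lambda(x,a,b)$.

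For Assumption~\ref{ass:3.1} I would take $V_0(x):=1+x^2\ge 1$. Item (i) follows from the cancellation $\int_S V_0(y)q(dy|x,a,b)=\lambda(x,a,b)\big[(1+x^2+\sigma^2)-(1+x^2)\big]=\lambda(x,a,b)\sigma^2\le M\sigma^2 V_0(x)$, so $\rho_0:=M\sigma^2$. Item (ii) follows from $q^*(x)=\sup_{a,b}\lambda(x,a,b)\le M(1+x^2)=MV_0(x)$, so $L_0:=M$. Item (iii) is precisely the bound in $(A_1)$ once one notes $\ln V_0(x)=\ln(1+x^2)$, with the same constant $M_0$.

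For Assumption~\ref{ass:3.1b} I would take $V_1(x):=(1+x^2)^2$, so that (ii) holds with equality $V_0^2(x)=V_1(x)$ and $M_1:=1$. The crucial point for (i) is a top-degree cancellation: expanding $V_1^2(y)=(1+y^2)^4=1+4y^2+6y^4+4y^6+y^8$ and integrating against $\mathcal{N}_x$, the leading $x^8$ term of $\int_S V_1^2(y)\mathcal{N}_x(dy)$ matches that of $(1+x^2)^4$ and cancels, so that $P(x):=\int_S V_1^2(y)\mathcal{N}_x(dy)-(1+x^2)^4$ is a polynomial in $x$ of degree only six, with nonnegative (even-power) coefficients depending on $\sigma$. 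Hence $\int_S V_1^2(y)q(dy|x,a,b)=\lambda(x,a,b)P(x)\le M(1+x^2)P(x)$, which is a degree-eight polynomial $\sum_{k=0}^{4}d_k x^{2k}$ with $d_k\ge 0$. Using $x^{2k}\le(1+x^2)^4$ for $0\le k\le 4$ then bounds this by $\rho_1 V_1^2(x)+b_1$, taking $\rho_1:=\sum_k d_k$ and any $b_1>0$, which is (i). This moment bookkeeping is the one genuinely computational step and the main obstacle; it rests entirely on the fact that the $y^8$ drift of $V_1^2$ cancels and leaves something quadratically dominated by $V_1^2$, mirroring the $(1+y)^4$ cancellation used in Proposition~\ref{prop6.2}.

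Assumption~\ref{ass:3.3} is then immediate: (i) is hypothesis $(A_2)$; for (ii), $q(D|x,a,b)=\lambda(x,a,b)[\mathcal{N}_x(D)-\delta_x(D)]$ is continuous in $(a,b)$ because $\lambda$ is continuous by $(A_3)$ and the bracket is independent of $(a,b)$; and for (iii), $r(x,a,b)$ is continuous by $(A_3)$ while $\int_S V_0(y)q(dy|x,a,b)=\lambda(x,a,b)\sigma^2$ inherits continuity from $\lambda$. With all three assumptions in force, Theorem~\ref{Th-4.3} delivers the solution of the Shapley equation and hence the existence of a Nash equilibrium, completing the proof.
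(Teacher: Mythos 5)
Your verification is correct, and it follows the paper's overall strategy exactly: check Assumptions~\ref{ass:3.1}, \ref{ass:3.1b}, \ref{ass:3.3} with explicit Lyapunov functions and invoke Theorem~\ref{Th-4.3}. Your treatment of Assumption~\ref{ass:3.1} (with $V_0(x)=1+x^2$, $\rho_0=M\sigma^2$, $L_0=M$) and of Assumption~\ref{ass:3.3} coincides with the paper's. The one genuine divergence is in Assumption~\ref{ass:3.1b}: the paper takes $V_1(x)=1+x^4$, so that item (ii) holds with $M_1=2$ via $(1+x^2)^2\le 2(1+x^4)$, and then verifies item (i) by computing the central moments of $(y^4+1)^2$ explicitly, arriving at the concrete constant $\rho_1=3780M(\sigma^8+\sigma^6+\sigma^4+\sigma^2)$, $b_1=1$. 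You instead take $V_1(x)=(1+x^2)^2=V_0^2(x)$, so (ii) holds with equality and $M_1=1$, and you establish (i) structurally: the degree-eight terms of $\int_S(1+y^2)^4\mathcal{N}_x(dy)$ and $(1+x^2)^4$ cancel, the remainder $P(x)$ has degree six with nonnegative coefficients, and $M(1+x^2)P(x)=\sum_{k\le 4}d_kx^{2k}\le\bigl(\sum_k d_k\bigr)(1+x^2)^4$. Both computations are sound (your claimed raw-moment formula and the nonnegativity of $P$'s coefficients check out). Your route is slightly cleaner and avoids the numeric moment bookkeeping, at the cost of leaving $\rho_1$ implicit; the paper's route produces explicit constants, which matters if one wants quantitative bounds (e.g., the growth constants entering Lemma~\ref{lem3.1} and the error tolerances in Algorithm~1), but either choice of $V_1$ suffices for Theorem~\ref{Th-4.3}, since the theorem only requires existence of such functions.
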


\begin{proof}
Let us first show that Assumption \ref{ass:3.1} holds.

There exist a real-valued Borel measurable function $V_0(x):=1+x^2\geq 1$ on $S$ and  positive constants
$\rho_0=M\sigma^2,  L_0=M, M_0=M_{0}$, such that
\begin{description}
\item[(i)]for any $x\in S, a\in A(x), b\in B(x)$,
 \begin{eqnarray*}
 \int_{S}V_0(y)q(dy|x,a,b)&=&\lambda(x,a,b)\left[\frac{1}{\sqrt{2\pi}\sigma}\int_{-\infty}^{+\infty}
(y^2+1)e^{-\frac{(y-x)^2}{2\sigma^2}}dy-(x^2+1)\right]\\
&=&\lambda(x,a,b) \sigma^2\leq M\sigma^2V_0(x); \end{eqnarray*}

\item[(ii)] $\displaystyle q^*(x)=\sup_{a\in A(x),b\in B(x)}q(x,a,b)=\sup_{a\in A(x),b\in B(x)}-q(\{x\}|x,a,b)=
 \sup_{a\in A(x),b\in B(x)}\lambda(x,a,b)\le  M(x^2+1)=M V_{0}(x)$ for all $x\in S$, where $q^*(x)$ is as in (\ref{Q});

\item[(iii)]  $|r(x,a,b)|\le     M_0+\frac{\sqrt{2}}{2}\sqrt{\ln (1+x^2)}=    M_0+\frac{\sqrt{2}}{2}\sqrt{\ln V_{0}(x)}$,  $x\in S, a\in A(x), b\in B(x)$, and $|g(x)|\leq     M_0+\frac{\sqrt{2}}{2}\sqrt{\ln (1+x^2)}=  M_0+\frac{\sqrt{2}}{2}\sqrt{\ln V_{0}(x)} $
for any $x\in S$.

\end{description}

Now we show that Assumption \ref{ass:3.1b} holds.

A directive calculation gives $\frac{1}{\sqrt{2\pi}\sigma}\int_{-\infty}^{\infty} (y-x)^{2k+1}e^{-\frac{(y-x)^2}{2\sigma^2}}dy=0$ and $\frac{1}{\sqrt{2\pi}\sigma}\int_{-\infty}^{\infty}(y-x)^{2k}e^{-\frac{(y-x)^2}{2\sigma^2}}dy =1\cdot 3\cdots (2k-1)\sigma^{2k}$ for all $k=0,1,\ldots$.

 There exist a real-valued Borel measurable function  $V_1(x):=1+x^4  \geq 1$ on $S$, and positive  constants
$\rho_1=3780M\left(\sigma^8+\sigma^6+\sigma^4+\sigma^2\right), b_1=1$,  and  $M_1=2$,  such that
\begin{description}
\item[(i)] for any $x\in S, a\in A(x), b\in B(x)$,
\begin{eqnarray*}
&&\int_{S}V_1^2(y)q(dy|x,a,b)\\&=&\lambda(x,a,b)\left[\frac{1}{\sqrt{2\pi}\sigma}\int_{-\infty}^{+\infty}
(y^4+1)^2e^{-\frac{(y-x)^2}{2\sigma^2}}dy-(x^4+1)^2\right]\\
&=&\lambda(x,a,b)\left(105\sigma^8+420x^2\sigma^6+210x^4\sigma^4+6\sigma^4+12\sigma^2x^2+28x^6\sigma^2\right) \\
&\leq&420 \lambda(x,a,b)\left(\sigma^8+\sigma^6+\sigma^4+\sigma^2\right)\left(x^6+x^4+x^2+1\right)\\
&\leq&420 \lambda(x,a,b)\left(\sigma^8+\sigma^6+\sigma^4+\sigma^2\right)\left(3x^6+3\right)\\
&\leq&1260 M\left(\sigma^8+\sigma^6+\sigma^4+\sigma^2\right)\left(x^6+1\right)(1+x^2)\\
&\leq&3780M\left(\sigma^8+\sigma^6+\sigma^4+\sigma^2\right)\left(x^4+1\right)^2\\
&\le &3780M\left(\sigma^8+\sigma^6+\sigma^4+\sigma^2\right)V_1^2(x)+1.
 \end{eqnarray*}

\item[(ii)] $V_0^2(x)=(1+x^2)^2\le 2(1+x^4)$ for all $x\in S$.
\end{description}
%
%


Thus,  Assumptions \ref{ass:3.1}, \ref{ass:3.1b} and \ref{ass:3.3} (under the hypotheses $A_1$--$A_3$) hold for Example \ref{Example3}, and then Theorem \ref{Th-4.3} gives the existence of a Nash equilibrium.
\end{proof}

\begin{rem}\label{rem3.8}
{\rm In Example \ref{Example2}, the payoff rates $r(x,a,b)$ are
allowed to be unbounded from above. In Example \ref{Example3}, the
payoff rates $r(x,a,b)$, the transition rates $q(dy|x,a,b)$ and the
terminal reward $g(x)$ are all unbounded from below and from above.

}
\end{rem}

\section{Conclusion}\label{section6}

In this paper we have studied a finite-horizon two-person zero-sum
risk-sensitive stochastic game for continuous-time Markov chains
with Borel state and action spaces, in  which the
payoff rates, the transition rates and the terminal rewards are allowed to be unbounded from below and from above
and the policies can be history-dependent. This model is a
generalization of that in  the existing literature \cite{BU-17} with
bounded payoff rates and Markov policies. To establish the
corresponding Shapley equation and the existence of a Nash
equilibrium for the general model, we develop a finite-approximation
technique. More specifically, for the bounded case (i.e., the
payoff rates, the transition rates and the terminal rewards are bounded), we first prove the
existence of a solution to the Shapley equation by the
Banach-fixed-point theorem with a $k$-step contraction operator,
establish the existence of both the value function  and a Nash equilibrium
for the stochastic game, and verify that the value function of the game
uniquely solves the Shapley equation by the extension of the
Dynkin's formula. Then, by developing a finite-approximation
technique, we extend the results for the bounded case to the general
case that the  payoff rates, the transition rates and the terminal rewards are unbounded (Theorem
\ref{Th-4.3}). As a consequence, our results extend the findings in
\cite{Wei18} and answer an open question posed there.

The computation of Nash equilibria is of significance and desirable
for the practical application of game theory. To the best of our
knowledge, our iteration algorithm developed in this paper for
computing the value function and Nash equilibria of risk-sensitive stochastic
games is a first attempt. We also prove the convergence of the
algorithm by a specific contraction operator. The combination of the
iteration algorithm with other approximation techniques  to handle
the issue of large scalability, such as reinforcement learning,
deserves further investigation in a regime of so-called multi-agent
reinforcement learning [10, 37].

\begin{appendices}

\section*{Appendix:}
\section{Proof of Proposition \ref{P-4.1}}
\begin{proof} (a)  Define the following operator $\Gamma$ on $\mathbb{B}_{1,1}^{1}([0,T]\times S)$ by
\begin{equation}
\Gamma{\varphi}(t,x):={e^{\theta g(x)}}+\int_t^T \sup_{\phi\in
P(A(x))}\inf_{\psi\in P(B(x))}\left[\theta
r(x,\phi,\psi){\varphi}(s,x)+\int_{S}{\varphi}(s,y)q(dy|x,\phi,\psi)\right]ds
\label{T21}
\end{equation}
for any $(t,x)\in [0,T]\times S$ and ${\varphi} \in
\mathbb{B}_{1,1}^{1}([0,T]\times S).$

Then, for each $(t,x)\in [0,T]\times S$, and any
$\varphi_{1},\varphi_{2}\in \mathbb{B}_{1,1}^{1}([0,T]\times S)$,
from (\ref{T21}) and $q(\{x\}|x,a,b)+q(S\setminus
\{x\})|x,a,b)\equiv 0$, we obtain
\begin{eqnarray*}
 |\Gamma\varphi_{1}(t,x)-\Gamma\varphi_{2}(t,x)|   &\leq & (\theta\|r\|+2\|q\|) \int_{t}^{T}\|\varphi_{1}-\varphi_{2}\|ds \nonumber \\
  &=&\tilde{L}(T-t)\|\varphi_{1}-\varphi_{2}\|, \label{Bf0}
\end{eqnarray*}
where $\tilde{L}:=\theta\|r\|+2\|q\|<\infty$. Furthermore,   by
induction we can  prove the following fact:
\begin{eqnarray}
|\Gamma^n\varphi_{1}(t,x)-\Gamma^n\varphi_{2}(t,x)| \leq \tilde{L}^n
\frac{(T-t)^n}{n!}\|\varphi_{1}-\varphi_{2}\|, \ \ \  \forall \
(t,x)\in [0,T]\times S,  n\geq 1. \label{Bf}
\end{eqnarray}
Since $\sum_{n=0}^\infty \tilde{L}^n
\frac{T^n}{n!}\|\varphi_{1}-\varphi_{2}\|=e^{\tilde{L}T}\|\varphi_{1}-\varphi_{2}\|<\infty$,
there exists some integer $k$ such that  the constant \begin{equation*}\label{beta}\beta:=
\tilde{L}^k \frac{T^k}{k!}<1.\end{equation*} Thus, by (\ref{Bf}) we have
$\|\Gamma^k\varphi_1-\Gamma^k\varphi_2\|\leq \beta
\|\varphi_{1}-\varphi_{2}\|$. Therefore, $\Gamma$ is a $k$-step contract
operator. Thus, there exists a function ${\varphi} \in
\mathbb{B}_{1,1}^{1}([0,T]\times S)$  such that $\Gamma{\varphi}={\varphi}$,
that is, for any $(t,x)\in[0,T]\times S,$
\begin{eqnarray}
 {\varphi}(t,x)={e^{\theta g(x)}}+\int_t^T \sup_{\phi\in P(A(x))}\inf_{\psi\in P(B(x))}\left[\theta r(x,\phi,\psi){\varphi}(s,x)+\int_S{\varphi}(s,y)q(dy|x,\phi,\psi)\right]ds. \label{C}
\end{eqnarray}
Since $\|q\|$, $\|r\|$ and $\|g\|$ are finite, by (\ref{C}) we see that ${\varphi}\in \mathbb{B}^{1}_{1,1}([0,T]\times S)$, and thus  (a) follows.

(b) By (a) and Fan's minimax theorem in \cite{Fan}, the minimax measurable selection theorems Theorem 2.2 in \cite{NA2} together with Lemma 4.1 in \cite{GuoZhang20},
we see that (b) is true.

(c)  Given any $\pi_1\in \Pi_1$, for each $x\in S$ and  $t\in E_{(\varphi,x)}$, by (\ref{S-2}) we have

\begin{eqnarray*}
   \begin{cases}
     -{\varphi}'(t,x)\geq \theta {\varphi}(t,x) \int_{A(x)}r(x,a,\hat\pi_2^t)\pi_1(da|\omega,t)+\int_{S}\int_{A(x)}{\varphi}(t,y)q(dy|x,a,\hat\pi_2^t)\pi_1(da|\omega,t),\\
     {\varphi}(T,x)=e^{\theta g(x)}.
    \end{cases}
\end{eqnarray*}
which, together with the fact that $x_t(\omega)$ is a piece-wise constant (by (\ref{T_k}), implies
\begin{eqnarray*}
 && -\left( e^{\int_0^t\int_{A(x_{v})}\theta r(x_v,a,\hat\pi_2^v)\pi_1(da|\omega,v)dv}{\varphi}(t,x_t) \right)'\\
&=& - e^{\int_0^t\int_{A(x_{v})}\theta r(x_v,a,\hat\pi_2^v)\pi_1(da|\omega,v)dv}\left\{\left[
\int_{A(x_{t})}\theta r(x_t,a,\hat\pi_2^t)\pi_1(da|\omega,t)\right]{\varphi}(t,x_t)+{\varphi}'(t,x_t) \right\}\\
&\ge & - e^{\int_0^t\int_{A(x_{v})}\theta r(x_v,a,\hat\pi_2^v)\pi_1(da|\omega,v)dv}\left[
\int_{A(x_{t})}\theta r(x_t,a,\hat\pi_2^t)\pi_1(da|\omega,t)\right]{\varphi}(t,x_t)\\
&&+e^{\int_0^t\int_{A(x_{v})}\theta r(x_v,a,\hat\pi_2^v)\pi_1(da|\omega,v)dv}\\
&&\left[\theta {\varphi}(t,x_{t}) \int_{A(x_{t})}r(x_{t},a,\hat\pi_2^t)\pi_1(da|\omega,t)+\int_{S}\int_{A(x_{t})}{\varphi}(t,y)q(dy|x_{t},a,\hat\pi_2^t)\pi_1(da|\omega,t)\right] \\
    &= &\int_{S}\int_{A(x_t)}q(dy|x_t,a,\hat\pi_2^t)\pi_1(da|\omega,t)\left(e^{\int_0^t\int_{A(x_{v})}\theta r(x_v,a,\hat\pi_2^v)\pi_1(da|\omega,v)dv}{\varphi}(t,y)\right), \ \ \ \ \forall \ t\ge 0.
\end{eqnarray*}

 Thus, by Lemma \ref{Th-3.2}(a) we have
 \begin{eqnarray*}
  &&\mathbb{E}_{x}^{\pi_1,\hat\pi_2} \left(e^{\int_0^T\int_{A(x_t)}\theta r(x_t,a,\hat\pi_2^t)\pi_1(da|\omega,t)dt+\theta g(x_{T})}\right)-{\varphi}(0,x) \\
  &=&\mathbb{E}_{x}^{\pi_1,\hat\pi_2} \left(e^{\int_0^T\int_{A(x_t)}\theta r(x_t,a,\hat\pi_2^t)\pi_1(da|\omega,t)dt}{\varphi}(T,x_T)\right)-{\varphi}(0,x)\leq 0
\end{eqnarray*}
and so
\begin{eqnarray}
J(\pi_1,\hat\pi_2,0,x)\leq {\varphi}(0,x) \ \ \ \ {\rm for
\  all} \ x\in S,\pi_1\in\Pi_1. \label{K1}
\end{eqnarray}
Moreover, by (\ref{eqtx}) a similar proof gives
\begin{eqnarray}
J(\pi_1,\hat\pi_2,t,x)\leq { \varphi}(t,x) \ \ \ \ {\rm for
\  all} \ t\in [0,T],x\in S,\pi_1\in\Pi_1^m. \label{K}
\end{eqnarray}
Therefore, since $\pi_1$ can be arbitrary, by (\ref{K1})-(\ref{K}) we have
\begin{eqnarray}
   \inf_{\pi_2\in \Pi_2}\sup_{\pi_1\in\Pi_1} J(\pi_1,\pi_2,0,x)\leq {\varphi}(0,x), \ {\rm  and} \   \inf_{\pi_2\in \Pi_2^{m}} \sup_{\pi_1\in\Pi_1^m}J(\pi_1,\pi_2,t,x)\leq {\varphi}(t,x), \label{U}
   \end{eqnarray}
  for   all $ (t,x)\in [0,T]\times S$.

  Furthermore, by (\ref{S-3}),   we have
\begin{eqnarray}
   &&\sup_{\pi_1\in \Pi_1}\inf_{\pi_2\in\Pi_2} J(\pi_1,\pi_2,0,x)\geq { \varphi}(0,x), \ {\rm  and} \   \ \sup_{\pi_1\in\Pi_1^m} \inf_{\pi_2\in \Pi_2^{m}} J(\pi_1,\pi_2,t,x)\geq {\varphi}(t,x),\ \ \ \ \ \ \ \ \ \\
  {\rm and} &&   J(\hat\pi_1,\hat\pi_2,0,x)={\varphi}(0,x), \quad \forall \ x\in S. \label{L}
   \end{eqnarray}
 By (\ref{U})-(\ref{L}) we have
  \begin{eqnarray}
   &&\sup_{\pi_1\in \Pi_1}\inf_{\pi_2\in\Pi_2} J(\pi_1,\pi_2,0,x)=\inf_{\pi_2\in\Pi_2}\sup_{\pi_1\in \Pi_1} J(\pi_1,\pi_2,0,x)={\varphi}(0,x), \label{LL}\\
  && \sup_{\pi_1\in \Pi_1^m}\inf_{\pi_2\in\Pi_2^m} J(\pi_1,\pi_2,t,x)=\inf_{\pi_2\in\Pi_2^m}\sup_{\pi_1\in \Pi_1^m} J(\pi_1,\pi_2,t,x)={\varphi}(t,x), \nonumber\\
    {\rm and} &&   J(\hat\pi_1,\hat\pi_2,t,x)={\varphi}(t,x), \ \ \ \ \forall \ (t,x)\in [0,T]\times S, \nonumber
   \end{eqnarray}
 which gives (c).

 (d) Obviously, (d) is from (\ref{LL}).

%

(e)  Fix  any $s,t\in [0,T]$ with $s<t$. Then,  for any  Markov policy $\pi_{1}\in \Pi_{1}^{M}$,
we define the corresponding Markov policy $\pi_{1,s}^t$ as follows: for each $x\in S$,
\begin{eqnarray}
&& \pi_{1,s}^{t}(x,v) = \begin{cases}
          \pi_{1}(x,v+t-s), & \mbox{$ v\geq s, \label{E-12}$} \\
   \pi_{1}(x,v), & \mbox{otherwise,}
    \end{cases}
\end{eqnarray}
where $\pi_{1}(x,t):=\pi_{1}(da|x,t)$ is a stochastic kernel on $A(x)$.
It is the similar notation for $\pi_{2}\in \Pi_{2}^{M}$. Then, we have,  for each $(x,v)\in  S\times [s, s+T-t]$,
$$q(dy|x,\pi_{1,s}^{t}(x,v),\pi_{2,s}^{t}(x,v))=q(dy|x,\pi_{1}(x,v+t-s),\pi_{2}(x,v+t-s)),$$  $$r(x,\pi_{1,s}^{t}(x,v),\pi_{2,s}^{t}(x,v))=r(x,\pi_{1}(x,v+t-s),\pi_{2}(x,v+t-s)).$$
Let
 \begin{eqnarray}
  &&  J(\pi_{1},\pi_{2},s\sim t,x):=\mathbb{E}_\gamma^{\pi_{1},\pi_{2}}
  \left[e^{\int_s^t\int_A\int_B\theta r(x_v,a,b)\pi_{1}(da|x_v,v)\pi_{2}(db|x_v,v)dv+\theta g(t,x_{t})}|x_s=x\right], \nonumber\\
   && J_*(s\sim t,x):=\sup_{\pi_{1}\in \Pi_{1}^{m}}\inf_{\pi_{2}\in \Pi_{2}^{m}}J(\pi_{1},\pi_{2},s\sim t,x). \label{E8}
\end{eqnarray}
By the Markov property of $\{x_t,t\geq 0\}$  (under any Markov policy $(\pi_{1},\pi_{2})$) and (\ref{E-12})-(\ref{E8}), { we have
$X_{u}$ under policies $\pi_{1},\pi_{2}$ and $X_{t}=x$ has the same distribution with $X_{u+s-t}$ under policies $\pi_{1,s}^{t},\pi_{2,s}^{t}$ and $X_{s}=x$ for
any $t\le u\le T$. Therefore,}
   $J(\pi_{1},\pi_{2},t\sim T,x)= J(\pi_{1,s}^{t},\pi_{2,s}^{t},s\sim T+s-t,x)$.
    From this, we have
    \begin{eqnarray*}&&\inf_{\pi_{2}\in \Pi_{2}^{M}}J(\pi_{1},\pi_{2},t\sim T,x)= \inf_{\pi_{2}\in \Pi_{2}^{M}}J(\pi_{1,s}^{t},\pi_{2,s}^{t},s\sim T+s-t,x)\\
    &\ge& \inf_{\pi_{2,s}^{t}\in \Pi_{2}^{M}}J(\pi_{1,s}^{t},\pi_{2,s}^{t},s\sim T+s-t,x),\quad \forall \pi_{1}\in \Pi_{1}^{M}.\end{eqnarray*}
    Similarly, we get
    \begin{eqnarray*}&&\inf_{\pi_{2,s}^{t}\in \Pi_{2}^{M}}J(\pi_{1,s}^{t},\pi_{2,s}^{t},s\sim T+s-t,x)=\inf_{\pi_{2,s}^{t}\in \Pi_{2}^{M}}J(\pi_{1},\pi_{2},t\sim T,x)\\
    &\ge& \inf_{\pi_{2}\in \Pi_{2}^{M}}J(\pi_{1},\pi_{2},t\sim T,x),\quad \forall \pi_{1}\in \Pi_{1}^{M}.\end{eqnarray*}
    Thus, $$\inf_{\pi_{2,s}^{t}\in \Pi_{2}^{M}}J(\pi_{1,s}^{t},\pi_{2,s}^{t},s\sim T+s-t,x)=\inf_{\pi_{2}\in \Pi_{2}^{M}}J(\pi_{1},\pi_{2},t\sim T,x),\quad \forall \pi_{1}\in \Pi_{1}^{M}.$$
    Similarly, we have $J_*(t\sim T,x)=J_*(s\sim T+s-t,x)$. Moreover, since $r(x,a,b)\geq 0$ on $K$, by (\ref{E8}) and $t>s$, we have $J_*(t\sim T,x)=J_*(s\sim T+s-t,x)\leq J_*(s\sim T,x)$, which, together with $J_*(t\sim T,x)={\varphi}(t,x)$ in part (c), gives  (e).
\end{proof}
\section{Proof of Proposition \ref{Th-4.2} }
\begin{proof}  We only prove part (a). This is because  parts (b) and (c)  can be proved as the same arguments of (c) and (d) of Proposition \ref{P-4.1}. First, under Assumption \ref{ass:3.1} (iii), we have
$$ 0\leq r(x,a,b)\leq \frac{\sqrt{2}}{2}\sqrt{\ln V_0(x)}+M_0 \ {\rm for \ all \ } (x,a,b)\in K, $$
and
$$ 0\leq g(x)\leq \frac{\sqrt{2}}{2}\sqrt{\ln V_0(x)}+M_0 \ {\rm for \ all \ } x\in S.$$
For each $n\geq 1$,    let $A_n(x):=A(x)$ and $B_n(x):=B(x)$ for $x\in S$,   $K_{n}:=\{(x,a,b)|x\in S, a\in
A_n(x),b\in B_n(x)\}$, and $S_n:=\{x\in S| V_0(x)\leq n\}$. Moreover, for each $x\in S$, $a\in A_n(x)$, $b\in B_n(x)$, let
\begin{equation}\label{app1}
q_{n}(dy|x,a,b):=
\begin{cases}
q(dy|x,a,b), &\text{if } x\in S_n,  \\
0, & \text{if } x\not\in S_n;
\end{cases}
\end{equation}
  \begin{equation}\label{app1a}
r_{n}^+(x,a,b):=
\begin{cases}
 \min\{n, r(x,a,b)\}, &\text{if } x\in S_n,\\
0, & \text{if } x\not\in S_n;
\end{cases}
\end{equation}
and
{
 \begin{equation}\label{app1ag}
g_{n}^+(x):=
\begin{cases}
 \min\{n, g(x)\}, &\text{if } x\in S_n,\\
0, & \text{if } x\not\in S_n.
\end{cases}
\end{equation}}
 Fix any $n\geq 1$. By (\ref{app1}),
it is obvious that $q_{n}(dy|x,a,b)$ denotes indeed
transition rates on $S$, which  are \emph{conservative} and \emph{stable}. Then,  we obtain a sequence of models
$\{\mathbb{M}_n^+\}$:
\begin{equation}\label{control4}
{\mathbb{M}}_n^+:=\big\{S,(A,A(x), x\in S), (B,B(x), x\in S),r_n^+(x,a,b), q_{n}(dy|x,a,b), g_{n}^{+}(x) \big\},
\end{equation}
for which the  payoff rates $r_n^+(x,a,b)$, the transition rates $q_n(dy|x,a,b)$  and terminal reward $g_{n}^{+}(x)$ are all bounded by Assumption \ref{ass:3.1}  and (\ref{app1})-(\ref{app1ag}). In the following arguments, any quality  with respect to ${\mathbb{M}}_n^+$ is labeled by a lower $n $, such  the risk-sensitive value  $J_n(\pi_1,\pi_2,t,x)$ of a pair of Markov policies $(\pi_1,\pi_2)$ and the value function $J_n(t,x):=\sup_{\pi_{1}\in \Pi_1^m}\inf_{\pi_2\in\Pi_2^m}J_{n}(\pi_1,\pi_2,t,x)$.

Obviously, Assumptions \ref{ass:3.1}, \ref{ass:3.1b} and
\ref{ass:3.3} still hold for    each model $\mathbb{M}_{n}^+$. Thus,
for each $n\geq 1$, it follows from Proposition \ref{P-4.1} that
there exists ${\varphi}_n(t,x)\in \mathbb{B}_{1,1}^{1}([0,T]\times S)$
satisfying   (\ref{EEE-2}) for the corresponding
 $\mathbb{M}_{n}^+$, that is,
 \begin{eqnarray}
   \begin{cases}
     \displaystyle {\varphi}_n'(t,x)+\sup_{\phi\in P(A(x))}\inf_{\psi\in P(B(x))}\left[\theta r_{n}^{+}(x,\phi,\psi){\varphi}_{n}(t,x)+\int_{S}{\varphi}_{n}(t,y)q_{n}(dy|x,\phi,\psi)\right]=0,&  \\
     {{\varphi}_n(T,x)=e^{\theta g_{n}^{+}(x)}}, \label{g1a}
    \end{cases}
   \end{eqnarray}
for each $x\in S$ and $t\in E_{({\varphi}_n,x)}$ with
$m_L(E_{({\varphi}_n,x)}^c)=0$.

From (\ref{S-1}) in Proposition \ref{P-4.1}(b), (\ref{g1a}) gives  the existence of
Markov policies $\phi_{n}(da|x,t)\in \Pi_1^m, \psi_{n}(db|x,t) \in
\Pi_2^m$ such that,
\begin{eqnarray}
   \begin{cases}
     {\varphi}_n'(t,x)+\theta r_n^+(x,\phi_{n}^{t},\psi_{n}^{t}){\varphi}_n(t,x)+\int_{S}{\varphi}_n(t,y)q_{n}(dy|x,\phi_{n}^{t},\psi_{n}^{t})=0,    \\
     {\varphi}_n(T,x)=e^{\theta g_{n}^{+}(x)}, \label{g2}
    \end{cases}
\end{eqnarray}
 for each $x\in S$ and $t\in E_{({\varphi}_n,x)}$ with $m_L(E_{({\varphi}_n,x)}^c)=0$.
{
From (\ref{app1}), (\ref{app1a}) and (\ref{app1ag}), we obtain
\begin{eqnarray*}
   \begin{cases}
     {\varphi}_n'(t,x)+\theta r_n^+(x,\phi_{n}^{t},\psi_{n}^{t}){\varphi}_n(t,x)+\int_{S}{\varphi}_n(t,y)q_{n}(dy|x,\phi_{n}^{t},\psi_{n}^{t})=0, x\in S_{n},   \\
     {\varphi}_n'(t,x)=0, x\notin S_{n},   \\
     {\varphi}_n(T,x)=e^{\theta g_{n}^{+}(x)}, x\in S_{n}, \\
     {\varphi}_n(T,x)=1,x\notin S_{n},
    \end{cases}
\end{eqnarray*}
 for each $t\in E_{({\varphi}_n,x)}$ with $m_L(E_{({\varphi}_n,x)}^c)=0$.

 From Proposition \ref{P-4.1}(b),
 \begin{eqnarray}
   \begin{cases}
     {\varphi}_n'(t,x)+\theta r_n^+(x,\phi,\psi_{n}^{t}){\varphi}_n(t,x)+\int_{S}{\varphi}_n(t,y)q_{n}(dy|x,\phi,\psi_{n}^{t})\le 0,    x\in S_{n},   \\
     {\varphi}_n'(t,x)=0, x\notin S_{n},   \\
       {\varphi}_n(T,x)=e^{\theta g_{n}^{+}(x)}, x\in S_{n}, \\
     {\varphi}_n(T,x)=1,x\notin S_{n}, \label{g2a}
    \end{cases}
\end{eqnarray}
for each $\phi\in P(A(x))$ and $t\in E_{({\varphi}_n,x)}$ with $m_L(E_{({\varphi}_n,x)}^c)=0$.

Also,  by Assumption 3.1(iii) and  (\ref{app1a}),  we have
$0\le r_{n}^{+}(x,a,b)\le r(x,a,b)\le M_{0}+\frac{\sqrt{2}}{2}\sqrt{\ln V_{0}(x)}$
for all $(x,a,b)\in K$ and $n\geq 1$. Then, using Lemma \ref{lem3.1} and Proposition \ref{P-4.1}(c) with $V_0=V_1\equiv 1$, from
(\ref{g2})  we have
\begin{eqnarray}
   e^{ -\theta \left[ T e^{\rho_{0}T}+M_{0}T+ e^{\rho_{0}T}+M_{0} \right]V_{0}(x) }\leq {\varphi}_n(t,x)=J_n(\phi_{n},\psi_{n},t,x)\leq LV_0(x), \quad \forall \
n\geq 1.  \label{u1}
\end{eqnarray}
Moreover,  ${\varphi}_n(t,x)\geq 0$. From (\ref{app1}), (\ref{app1a}) and (\ref{app1ag}),  we have $r_n^+(x,a,b)\geq r_{n-1}^+(x,a,b)$ for all $(x,a,b)\in K$,
  $q_{n-1}(dy|x,a,b)=q(dy|x,a,b)=q_{n}(dy|x,a,b)$ for $x\in S_{n-1}$,
  $q_{n-1}(dy|x,a,b)=0=q_{n}(dy|x,a,b)$ for $x\in S\setminus S_{n},$  and
  $0=q_{n-1}(dy|x,a,b)$ for $x\in S_{n}\setminus S_{n-1}$,  $g_{n-1}^{+}(x)\le g_{n}^{+}(x)$ for $x\in S_{n-1}$,
  $g_{n-1}^{+}(x)=0=g_{n}^{+}(x)$ for $x\in S\setminus S_{n},$  and
  $0=g_{n-1}^{+}(x)\le g_{n}^{+}(x)$ for $x\in S_{n}\setminus S_{n-1}$.  By (\ref{g2a}) and Proposition \ref{P-4.1}(b),(e), we have,   for all $t\in E_{({\varphi}_n,x)}$ and $n\geq 2$,
   \begin{eqnarray*}
   \begin{cases}
    {\varphi}_n'(t,x)+ \theta r_{n-1}^+(x,\phi,\psi_{n}^{t}){\varphi}_n(t,x)
     +\int_{S}{\varphi}_n(t,y)q_{n-1}(dy|x,\phi,\psi_{n}^{t})\leq 0,  \  x\in S_{n-1}, &   \\
    {\varphi}_n'(t,x)+ \theta r_{n-1}^+(x,\phi,\psi_{n}^{t}){\varphi}_n(t,x)
         +\int_{S}{\varphi}_n(t,y)q_{n-1}(dy|x,\phi,\psi_{n}^{t})= {\varphi}_n'(t,x)\leq 0,  \  x\in S_{n}\setminus S_{n-1}, & \\
    {\varphi}_n'(t,x)= 0,  \  x\in S\setminus S_{n}, & \\
          {\varphi}_n(T,x)=e^{\theta g_{n}^{+}(x)}, x\in S. \\
    \end{cases}
   \end{eqnarray*}
   Therefore,
      \begin{eqnarray*}
   \begin{cases}
    {\varphi}_n'(t,x)+ \theta r_{n-1}^+(x,\phi,\psi_{n}^{t}){\varphi}_n(t,x)
     +\int_{S}{\varphi}_n(t,y)q_{n-1}(dy|x,\phi,\psi_{n}^{t})\leq 0,  \  x\in S, &   \\
     {\varphi}_n(T,x)=e^{\theta g_{n}^{+}(x)}, x\in S.
    \end{cases}
   \end{eqnarray*}

From the proof of Proposition \ref{P-4.1}(c), we have
\begin{eqnarray*}
 &&\mathbb{E}_{\gamma}^{\pi_1,\psi_{n}} \left(e^{\int_t^T\int_{A(x_v)}\theta r_{n-1}^{+}(x_v,a,\psi_{n}^{v})\pi_1(da|x_{v},v)dv+\theta g_{n}^{+}(x_{T})}|x_{t}=x\right)-{\varphi}_{n}(t,x) \nonumber \\
  &=&\mathbb{E}_{\gamma}^{\pi_1,\psi_{n}} \left(e^{\int_t^T\int_{A(x_v)}\theta r_{n-1}^{+}(x_v,a,\psi_{n}^{v})\pi_1(da|x_v,v)dv}{\varphi}_{n}(T,x_T)|x_{t}=x\right)-{\varphi}_{n}(t,x)\leq 0.
 \end{eqnarray*}
 Therefore, by (\ref{app1ag}), we have
 \begin{eqnarray}
&&J_{n-1}(\pi_{1},\psi_{n},t,x)\nonumber\\
&=&\mathbb{E}_{\gamma}^{\pi_1,\psi_{n}} \left(e^{\int_t^T\int_{A(x_v)}\theta r_{n-1}^{+}(x_v,a,\psi_{n}^{v})\pi_1(da|x_{v},v)dv+\theta g_{n-1}^{+}(x_{T})}|x_{t}=x\right) \nonumber\\
&\leq&
\mathbb{E}_{\gamma}^{\pi_1,\psi_{n}} \left(e^{\int_t^T\int_{A(x_v)}\theta r_{n-1}^{+}(x_v,a,\psi_{n}^{v})\pi_1(da|x_{v},v)dv+\theta g_{n}^{+}(x_{T})}|x_{t}=x\right)\nonumber\\
&\le &{\varphi}_{n}(t,x) \quad {\rm for
\  all \ } \pi_{1}\in \Pi_{1}^{m}, \ (t\times x)\in [0,T]\times S. \label{K4}
\end{eqnarray}
}
Further, from (\ref{K4}), we obtain
\begin{eqnarray*}
\sup_{\pi_{1}\in \Pi_{1}^{m}}J_{n-1}(\pi_{1},\psi_{n},t,x)\leq {\varphi}_{n}(t,x) \ \ \ \ {\rm for
\  all \ }  \ t\in [0,T],x\in S.
\end{eqnarray*}
Therefore, $${\varphi}_{n-1}(t,x)=\sup_{\pi_{1}\in \Pi_{1}^{m}}\inf_{\pi_{2}\in \Pi_{2}^{m}}J_{n-1}(\pi_{1},\pi_{2} ,t,x)\le
\sup_{\pi_{1}\in \Pi_{1}^{m}}J_{n-1}(\pi_{1},\psi_{n},t,x) \leq {\varphi}_n(t,x).$$
   Thus, we have ${\varphi}_{n-1}(t,x)\leq {\varphi}_n(t,x)$, that is, the sequence $\{{\varphi}_n, n\geq 1\}$ is nondecreasing in $n\geq 1$, and thus the limit
   \begin{eqnarray}
   {\varphi}(t,x):=\lim_{n\to\infty}{\varphi}_n(t,x)\label{PG} \end{eqnarray} exists for each $(t,x)\in [0,T]\times S.$

For every $n\geq 1$ and $(t,x)\in [0,T]\times S$, we define
\begin{eqnarray}
&& H_{n}(t,x):=\sup_{\phi\in P(A(x))}\inf_{\psi\in P(B(x))}\left[\theta r_{n}^+(x,\phi,\psi){\varphi}_{n}(t,x)+\int_S{\varphi}_{n}(t,y)q_{n}(dy|x,\phi,\psi)\right],\label{Hn}\\
&& H(t,x):=\sup_{\phi\in P(A(x))}\inf_{\psi\in P(B(x))}\left[\theta r(x,\phi,\psi){\varphi}(t,x)+\int_S{\varphi}(t,y)q(dy|x,\phi,\psi)\right].\nonumber
\end{eqnarray}
We next show that $\lim_{n\to\infty} H_{n}(t,x)=H(t,x)$ for each
$(t,x)\in [0,T]\times S$.

From Assumption \ref{ass:3.3}, there exist $\phi_{n}^{t}\in P(A(x))$ and  $\psi_{n}^{t}\in P(B(x))$ such that
\begin{eqnarray*}
 H_{n}(t,x)=\theta r_{n}^+(x,\phi_{n}^{t},\psi_{n}^{t}){\varphi}_{n}(t,x)+\int_S{\varphi}_{n}(t,y)q_{n}(dy|x,\phi_{n}^{t},\psi_{n}^{t}).
\end{eqnarray*}
By (\ref{u1}), we have  $\limsup_{n\to
\infty}H_{n}(t,x)=\lim_{m\to \infty}H_{n_m}(t,x)$ for some
subsequence $\{n_m, m\geq1\}$ of $\{n, n\geq1 \}$.
For each $m\geq 1$, under Assumption \ref{ass:3.3}, the measurable
selection theorem (e.g. Proposition 7.50 in \cite{BS96}) together
with Lemma 4.1 in \cite{GuoZhang20}  ensures the existence of $\phi_{n_m}\in
\Pi_1^{M}$ and $\psi_{n_m}\in \Pi_2^{M}$ such that
\begin{eqnarray}
\hspace{-2cm}&&H_{n_m}(t,x)=\sup_{\phi\in P(A(x))}\inf_{\psi\in P(B(x))}\left[\theta r_{n_m}^+(x,\phi,\psi){\varphi}_{n_m}(t,x)+\int_{S}{\varphi}_{n_m}(t,y)q_{n_m}(dy|x,\phi,\psi)\right] \nonumber \\
\hspace{-1cm}&=&\theta
r_{n_m}^+(x,\phi_{n_m}^{t},\psi_{n_m}^{t}){\varphi}_{n_m}(t,x)+\int_{S}{\varphi}_{n_m}(t,y)q_{n_m}(dy|x,\phi_{n_m}^{t},\psi_{n_m}^{t}).
\label{H2s}
\end{eqnarray}
Since $\phi_{n_m}^{t}\in P(A(x))$ for all $m\geq 1$ and $P(A(x))$
is compact, there exists a subsequence $\{\phi_{n_{m_k}}^{t}, k\geq
1\}$ of $\{\phi_{n_m}^{t}, m\geq 1\}$ and $\bar{\phi}^{t}\in
P(A(x))$ (depending on $(t,x)$)  such that $\phi_{n_{m_k}}^{t}\to
\bar{\phi}^{t}$ as $k\to\infty$. It is the same for
$\psi_{n_m}^{t}$. Thus, $\lim_{m\to \infty}H_{n_m}(t,x)=\lim_{k\to
\infty}H_{n_{m_k}}(t,x)$. Indeed, for any fixed $(t,x)\in
[0,T]\times S$,  there exists $n_0\geq 1$ such that  $(t,x)\in
[0,T]\times S_{n_0}$, and then $q_{n}(dy|x,a,b)=q(dy|x,a,b)$ for all
$n\geq n_0$ and $\lim_{n\to\infty} r_n^+(x,a,b)=r(x,a,b)$ for all
$a\in A(x), b\in B(x)$. Thus, by Lemma 8.3.7 in \cite{one99} and
(\ref{H2s}), (\ref{Hn}) and Assumption \ref{ass:3.3}, we have
\begin{eqnarray*}
&&\limsup_{n\to \infty}H_{n}(t,x)= \lim_{k\to
\infty}H_{n_{m_k}}(t,x)\\
&=& \lim_{k\to
\infty} \theta
r_{n_{m_k}}^+(x,\phi_{n_{m_k}}^{t},\psi_{n_{m_k}}^{t}){\varphi}_{n_{m_k}}(t,x)+\int_{S}{\varphi}_{n_{m_k}}(t,y)
q_{n_{m_k}}(dy|x,\phi_{n_{m_k}}^{t},\psi_{n_{m_k}}^{t})\\
&=& \lim_{k\to
\infty}  \inf_{\psi\in P(B(x))}\left[\theta r_{n_{m_k}}^+(x,\phi_{n_{m_k}}^{t},\psi){\varphi}_{n_{m_k}}(t,x)+\int_{S}{\varphi}_{n_{m_k}}(t,y)
q_{n_{m_k}}(dy|x,\phi_{n_{m_k}}^{t},\psi)\right]\\
&\le & \lim_{k\to \infty}\left[\theta
r_{n_{m_k}}^{+}(x,\phi_{n_{m_k}}^{t},\psi){\varphi}_{n_{m_k}}(t,x)+\int_{S}{\varphi}_{n_{m_{k}}}(t,y)q_{n_{m_{k}}}
          (dy|x,\phi_{n_{m_{k}}}^{t},\psi)\right] \\
          &=& \theta r(x,\bar{\phi}^{t},\psi){\varphi}(t,x)+\int_{S}{\varphi}(t,y)q(dy|x,\bar{\phi}^{t},\psi),\quad \forall \psi\in P(B(x)).
\end{eqnarray*}

%
%
Hence, \begin{eqnarray}
 && \limsup_{n\to \infty}H_{n}(t,x)\leq
\inf_{\psi\in P(B(x))} \left[\theta r(x,\bar{\phi}^{t},\psi){\varphi}(t,x)+\int_{S}{\varphi}(t,y)q(dy|x,\bar{\phi}^{t},\psi)\right]\nonumber\\
&\leq& \sup_{\phi\in P(A(x))}\inf_{\psi\in P(B(x))} \left[\theta
r(x,\phi,\psi){\varphi}(t,x)+\int_{S}{\varphi}(t,y)q(dy|x,\phi,\psi)\right]=H(t,x).\label{H1}
\end{eqnarray}

By the similar reasoning, we have
 \begin{eqnarray}
 && \liminf_{n\to \infty}H_{n}(t,x)\geq H(t,x).\label{H11}
\end{eqnarray}
(\ref{H1}) together with (\ref{H11}) implies that $\lim_{n\to\infty}
H_{n}(t,x)=H(t,x)$. Thus, by (\ref{g1a}) we have
 \begin{eqnarray}
     {\varphi}(t,x)={e^{\theta g(x)}}+\int_{t}^{T}\sup_{\phi\in P(A(x))}\inf_{\psi\in P(B(x))}\left[\theta r(x,\phi,\psi){\varphi}(s,x)+\int_{S}{\varphi}(s,y)q(dy|x,\phi,\psi)\right]ds. \label{3h}
\end{eqnarray}

Since ${\varphi}(t,x)$ is the integral of a measurable function, it is
an absolutely continuous function, and so ${\varphi}(t,x)$ is differential in a.e. $t\in [0,T]$ (for each fixed
$x\in S$). We can verify that  ${\varphi}(t,x)$   satisfies
(\ref{EEE-2}). To show   ${\varphi}(t,x)\in
\mathbb{B}_{V_0,V_1}^{1}([0,T]\times S)$, since  ${\varphi}(t,x)\in
\mathbb{B}_{V_0}([0,T]\times S)$ (by (\ref{u1}), (\ref{PG})), the
rest verifies that  ${\varphi}'(t,x)$ is $V_1$-bounded. Indeed, since
$\theta T| r(x,a,b)|\leq e^{2T\theta|r(x,a,b)|}\leq
e^{2T\theta(M_{0}+T\theta)}V_0(x)$, from (\ref{3h}) we have
\begin{eqnarray*}
 |{\varphi}'(t,x)| &\leq & \frac {e^{2T\theta(M_{0}+T\theta)}}{T}\|{\varphi}\|_{V_0}V_0(x)V_0(x)+ \|{\varphi}\|_{V_0}\left[\rho_0V_0(x)+2V_0(x)q^*(x)\right] \\
                  &\leq & \|{\varphi}\|_{V_0}\left[ \frac{e^{2T\theta(M_{0}+T\theta)}}{T}V_0^2(x)+\rho_0V_0(x)+2L_{0}V_0^2(x)\right] \\
                   &\leq &  \|{\varphi}\|_{V_0}M_1\left [\frac{e^{2T\theta(M_{0}+T\theta)}}{T}+\rho_0+2L_0\right]V_1(x),
\end{eqnarray*}
which implies that ${\varphi}(t,x)$ is in
$\mathbb{B}_{V_0,V_1}^{1}(\times [0,T]\times S)$, and thus (a) is
proved.
\end{proof}

\section{Proof of Theorem \ref{lem4.1} }

\begin{proof}
(a) 
 From (\ref{nn+1}) and (\ref{Bf}),  we have,  $\forall  n\geq 0,
(t,x)\in [0,T]\times S,$
\begin{eqnarray}
|v_{n+1}(t,x)-v_{n}(t,x)|&=& |\Gamma^{n}v_{1}(t,x)-\Gamma^nv_{0}(t,x)|\nonumber\\
 &\leq &
\frac{\left(\theta\|r\|+2\|q\|\right)^n(T-t)^n}{n!}\|v_{1}-v_{0}\| \nonumber\\
& \leq &
\frac{\left(\theta\|r\|+2\|q\|\right)^nT^n}{n!}\|v_{1}-v_{0}\| \label{vn}
.
\end{eqnarray}
Since $\sum_{n=0}^{\infty}\frac{\left(\theta\|r\|+2\|q\|\right)^nT^n}{n!}=e^{\left(\theta\|r\|+2\|q\|\right)T}$,

$$\lim_{n\to \infty}\frac{\left(\theta\|r\|+2\|q\|\right)^nT^n}{n!}=0.$$
We also obtain $\frac{\left(\theta\|r\|+2\|q\|\right)^nT^n}{n!}$ monotonically decreases when  $n$ increases and $n>(\theta\|r\|+2\|q\|)T$.
Therefore, given $\varepsilon>0$, there exists $N_{1}>0$ such that, for all $n\geq N_{1}$,
\begin{eqnarray*}
\|v_{n+1}-v_{n}\|&=&\sup_{(t,x)\in [0,T]\times S}\|v_{n+1}(t,x)-v_{n}(t,x)\|\nonumber\\
&\le & \frac{\left(\theta\|r\|+2\|q\|\right)^nT^n}{n!}\|v_{1}-v_{0}\| <\frac{\varepsilon}{2e^{\left(\theta\|r\|+2\|q\|\right)T}\left(1+\frac{2\|q\|}{\theta \|r\|}\right)}. 
\end{eqnarray*}

Now, we prove that the sequence $\{v_{n}(t,x),n=0,1,2,\ldots\}$ converges.
From (\ref{vn}) and (\ref{14}),  for all $n\geq N_{1}, m\ge 1$,  $\forall \
(t,x)\in [0,T]\times S,$
\begin{eqnarray}
|v_{n+m}(t,x)-v_{n+1}(t,x)|
&\leq&      \sum\limits_{p=1}^{m-1} |v_{n+(p+1)}(t,x)-v_{n+p}(t,x)|\nonumber\\
&\le &  \sum\limits_{p=1}^{m-1} \frac{\left(\theta\|r\|+2\|q\|\right)^{n+p}T^{n+p}}{(n+p)!}\|v_{1}-v_{0}\|\nonumber\\
&\le & \frac{\left(\theta\|r\|+2\|q\|\right)^{n}T^{n}}{n!}  \sum\limits_{p=1}^{m-1}
 \frac{\left(\theta\|r\|+2\|q\|\right)^{p}T^{p}}{p!}\|v_{1}-v_{0}\|\nonumber\\
 &<& \frac{\left(\theta\|r\|+2\|q\|\right)^{n}T^{n}}{n!}   e^{\left(\theta\|r\|+2\|q\|\right)T}\|v_{1}-v_{0}\|<\frac{\varepsilon}{2}. \label{n+m}
\end{eqnarray}
From the above formula we can see the sequence $\{v_{n}(t,x),n=0,1,2,\ldots\}$ is a Cauchy sequence. Since $\mathbb{B}^{1}_{1,1}([0,T]\times S)$ is a Banach space,
the sequence $\{v_{n}(t,x),n=0,1,2,\ldots\}$  has a limit. From Proposition \ref{P-4.1}, we obtain  $\lim_{n\to \infty}v_n(t,x)=\varphi(t,x)$, for any $(t,x)\in [0,T]\times S$.
By (\ref{n+m}) and let $m\to \infty$, we obtain
\begin{equation*}
\|\varphi-v_{n+1}\|=\sup_{(t,x)\in [0,T]\times S}|\varphi(t,x)-v_{n+1}(t,x)|<\frac{\varepsilon}{2}, \quad \forall n\ge N_{1}.
\end{equation*}

(b) By Assumption \ref{ass:3.3}, the compactness of $P(A(x))$ and $P(B(x))$ and the continuity of $r,q,v$,  there exist  Markov policies $\phi_{n}\in \Pi_1^m, \psi_{n} \in
\Pi_2^m$ such
that (\ref{15}) holds. That is, $\forall n=0,1,2,\ldots$,  $\forall (t,x)\in [0,T]\times S$, by (\ref{nn+1}),
\begin{eqnarray}
&&v_{n+1}(t,x)\nonumber\\
&=&e^{\theta g(x)}+
\int_t^T\sup_{\phi\in P(A(x))}\inf_{\psi\in P(B(x))}\left[\theta
r(x,\phi,\psi)v_n(s,x)+\int_{S}v_n(s,y)q(dy|x,\phi,\psi)\right]ds\nonumber\\
&=&e^{\theta g(x)}+
\int_t^T\sup_{\phi\in P(A(x))}\left[\theta
r(x,\phi,\psi_{n}^s)v_n(s,x)+\int_{S}v_n(s,y)q(dy|x,\phi,\psi_{n}^{s})\right]ds\label{vn+1s}\\
&=&e^{\theta g(x)}+\int_t^T\left[\theta r(x,\phi_{n}^s,\psi_{n}^s)v_n(s,x)+\int_{S}v_n(s,y)q(dy|x,\phi_{n}^s,\psi_{n}^s)\right]ds\label{vn+1}.
\end{eqnarray}

By (\ref{vn+1}) and (\ref{vn+1s}), for any $\pi_{1}\in \Pi_{1}^{m}$, for each $x\in S$ and $t\in E_{(\varphi,x)}$,   we have
\begin{eqnarray}&&-v_{n+1}'(t,x)\nonumber\\&=&\theta r(x,\phi_{n}^t,\psi_{n}^t)v_n(t,x)+\int_{S}v_n(t,y)q(dy|x,\phi_{n}^t,\psi_{n}^t)\nonumber\\
&\ge &\theta r(x,\pi_1^t,\psi_{n}^t) v_{n}(t,x)+\int_{S}v_{n}(t,y)q(dy|x,\pi_1^t,\psi_{n}^t). \label{vn+1ge}
\end{eqnarray}

Together with the fact that $x_t(\omega)$ is a piece-wise constant (by (\ref{T_k})), (\ref{vn+1ge})  implies
\begin{eqnarray}
 && -\left( e^{\int_0^t\theta r(x_v,\pi_1^v,\psi_{n}^v)dv}v_{n+1}(t,x_t) \right)'\nonumber\\
&=& - e^{\int_0^t\theta r(x_v,\pi_1^v,\psi_{n}^v)dv}\left[
\theta r(x_t,\pi_1^t,\psi_{n}^t)v_{n+1}(t,x_t)+v_{n+1}'(t,x_t) \right]\nonumber\\
&\ge & - e^{\int_0^t\theta r(x_v,\pi_1^v,\psi_{n}^v)dv}
\theta r(x_t,\pi_1^t,\psi_{n}^t)v_{n+1}(t,x_t)+e^{\int_0^t\theta r(x_v,\pi_1^v,\psi_{n}^v)dv}\nonumber\\
&&\left[\theta v_{n}(t,x_{t}) r(x_{t},\pi_1^t,\psi_{n}^t)+\int_{S}v_{n}(t,y)q(dy|x_{t},\pi_1^t,\psi_{n}^t)\right]\nonumber \\
    &= &e^{\int_0^t\theta r(x_v,\pi_1^v,\psi_{n}^v)dv}\int_{S}v_{n}(t,y)q(dy|x_t,\pi_1^t,\psi_{n}^t) \nonumber \\
   &&  + e^{\int_0^t\theta r(x_v,\pi_1^v,\psi_{n}^v)dv}
\theta r(x_t,\pi_1^t,\psi_{n}^t)\left[v_{n}(t,x_t)-v_{n+1}(t,x_t)\right]\nonumber\\
 &\ge &e^{\int_0^t\theta r(x_v,\pi_1^v,\psi_{n}^v)dv}\int_{S}v_{n}(t,y)q(dy|x_t,\pi_1^t,\psi_{n}^t) -e^{t\theta \|r\|}
\theta \|r\|\|v_{n+1}-v_{n}\|,
 \ \ \forall \ t\ge 0. \label{ineq}
\end{eqnarray}

 Thus, by Lemma \ref{Th-3.2}(b) and (\ref{ineq}), we have

 \begin{eqnarray*}
&&\mathbb{E}_{\gamma}^{\pi_1,\psi_{n}}\left[e^{\int_s^T \theta
r(x_t,\pi_1^t,\psi_{n}^t)dt}v_{n+1}(T,x_{T})\Big|x_s=x\right]-v_{n+1}(s,x) \\
&=&\mathbb{E}_{\gamma}^{\pi_1,\psi_{n}}\left\{\int_{s}^{T}
\left[\left(e^{\int_s^t\theta
r(x_v,\pi_1^v,\psi_{n}^v)dv}v_{n+1}(t,x_t)\right)'\right.\right.\\
&&\left.\left.+\int_Se^{\int_s^t\theta
r(x_v,\pi_1^v,\psi_{n}^v)dv}v_{n+1}(t,y)q(dy|x_t,\pi_1^t,\psi_{n}^t)\right]dt\Big|x_s=x\right\}\\
&\le &\mathbb{E}_{\gamma}^{\pi_1,\psi_{n}}\left\{\int_{s}^{T}
\left[-\int_{S}q(dy|x_t,\pi_1^t,\psi_{n}^t)e^{\int_s^t\theta r(x_v,\pi_1^v,\psi_{n}^v)dv}v_{n}(t,y) \right.\right.\nonumber\\
   &&  +e^{t\theta \|r\|}
\theta \|r\|\|v_{n+1}-v_{n}\|\left.\left.+\int_Se^{\int_s^t\theta
r(x_v,\pi_1^v,\psi_{n}^v)dv}v_{n+1}(t,y)q(dy|x_t,\pi_1^t,\psi_{n}^t)\right]dt\Big|x_s=x\right\}\\
&= &\mathbb{E}_{\gamma}^{\pi_1,\psi_{n}}\left\{\int_{s}^{T}
\left[\int_{S}q(dy|x_t,\pi_1^t,\psi_{n}^t)e^{\int_s^t\theta r(x_v,\pi_1^v,\psi_{n}^v)dv}\left(v_{n+1}(t,y)-v_{n}(t,y)\right) \right.\right.\nonumber\\
   && \left.\left. +e^{t\theta \|r\|}
\theta \|r\|\|v_{n+1}-v_{n}\|\right]dt\Big|x_s=x\right\}\\
&\le  &e^{T\theta \|r\|}
\|v_{n+1}-v_{n}\|+2\|q\|\|v_{n+1}-v_{n}\|\int_{0}^{T}e^{t\theta \|r\|}dt\\
&\le  &e^{T\theta \|r\|}
\|v_{n+1}-v_{n}\|+2\|q\|\|v_{n+1}-v_{n}\|\frac{1}{\theta \|r\|}e^{T\theta \|r\|}\\
&\le  &e^{T\theta \|r\|}
\|v_{n+1}-v_{n}\|\left(1+\frac{2\|q\|}{\theta \|r\|}\right),\ \ \ \ \forall \ s\in [0,T].
\end{eqnarray*}

  For all $n\geq N_{1}$, by (\ref{14}),
\begin{eqnarray*}
J(\pi_1,\psi_{n},s,x)-v_{n+1}(s,x)<\frac{\varepsilon}{2} \quad  {\rm for
\  all} \ (s,x)\in [0,T]\times S, \  {\rm for \  all}\ \pi_1\in\Pi_1.
\end{eqnarray*}
Therefore,
\begin{equation}\label{eq:le}
J(\phi_{n},\psi_{n},t,x)-v_{n+1}(t,x)<\frac{\varepsilon}{2}, \quad \forall (t,x)\in [0,T]\times S.
\end{equation}
A similar proof gives
\begin{equation}\label{eq:ge}
J(\phi_{n},\psi_{n},t,x)-v_{n+1}(t,x)>-\frac{\varepsilon}{2}, \quad \forall (t,x)\in [0,T]\times S.
\end{equation}
Thus, by (\ref{eq:le}) and (\ref{eq:ge}), we get
\begin{equation}\label{eq:ab}
|J(\phi_{n},\psi_{n},t,x)-v_{n+1}(t,x)|<\frac{\varepsilon}{2}, \quad \forall (t,x)\in [0,T]\times S.
\end{equation}
From (\ref{n+1}) and (\ref{eq:ab}),
\begin{eqnarray*}
&&\sup_{(t,x)\in [0,T]\times S}|J(\phi_{n},\psi_{n},t,x)-\varphi(t,x)|\\&=&\sup_{(t,x)\in [0,T]\times S}
|J(\phi_{n},\psi_{n},t,x)-v_{n+1}(t,x)+v_{n+1}(t,x)-\varphi(t,x)|\\
&\le &\sup_{(t,x)\in [0,T]\times S}
\left(|J(\phi_{n},\psi_{n},t,x)-v_{n+1}(t,x)|+|v_{n+1}(t,x)-\varphi(t,x)|\right)<\varepsilon.
\end{eqnarray*}

\end{proof}

\end{appendices}

\end{document}